\newtheorem{theorem}{Theorem}[section]
\newtheorem{lemma}[theorem]{Lemma}
\newtheorem{proposition}[theorem]{Proposition}
\theoremstyle{definition}
\newtheorem{definition}[theorem]{Definition}
\newtheorem{example}[theorem]{Example}
\newtheorem{rema}[theorem]{Remark}
\newtheorem{coro}[theorem]{Corollary}
\newtheorem{que}[theorem]{Question}
\begin{document}
\newcommand{\nc}{\newcommand}
\nc{\rnc}{\renewcommand} \nc{\nt}{\newtheorem}


\nc{\TitleAuthor}[2]{\nc{\Tt}{#1}%
    \nc{\At}{#2}%
    \maketitle%
}


\nc{\Hom}{\operatorname{Hom}} \nc{\Mor}{\operatorname{Mor}} \nc{\Aut}{\operatorname{Aut}}
\nc{\Ann}{\operatorname{Ann}} \nc{\Ker}{\operatorname{Ker}} \nc{\Trace}{\operatorname{Trace}}
\nc{\Char}{\operatorname{Char}} \nc{\Mod}{\operatorname{Mod}} \nc{\End}{\operatorname{End}}
\nc{\Spec}{\operatorname{Spec}} \nc{\Span}{\operatorname{Span}} \nc{\sgn}{\operatorname{sgn}}
\nc{\Id}{\operatorname{Id}} \nc{\Com}{\operatorname{Com}}

\nc{\nd}{\mbox{$\not|$}} 
\nc{\nci}{\mbox{$\not\subseteq$}}
\nc{\scontainin}{\mbox{$\mbox{}\subseteq\hspace{-1.5ex}\raisebox{-.5ex}{$_\prime$}\hspace*{1.5ex}$}}


\nc{\R}{{\sf R\hspace*{-0.9ex}\rule{0.15ex}%
    {1.5ex}\hspace*{0.9ex}}}
\nc{\N}{{\sf N\hspace*{-1.0ex}\rule{0.15ex}%
    {1.3ex}\hspace*{1.0ex}}}
\nc{\Q}{{\sf Q\hspace*{-1.1ex}\rule{0.15ex}%
       {1.5ex}\hspace*{1.1ex}}}
\nc{\C}{{\sf C\hspace*{-0.9ex}\rule{0.15ex}%
    {1.3ex}\hspace*{0.9ex}}}
\nc{\Z}{\mbox{${\sf Z}\!\!{\sf Z}$}}


\newcommand{\gd}{\delta}
\newcommand{\sub}{\subset}
\newcommand{\cntd}{\subseteq}
\newcommand{\go}{\omega}
\newcommand{\Pa}{P_{a^\nu,1}(U)}
\newcommand{\fx}{f(x)}
\newcommand{\fy}{f(y)}
\newcommand{\gD}{\Delta}
\newcommand{\gl}{\lambda}
\newcommand{\half}{\frac{1}{2}}
\newcommand{\ga}{\alpha}
\newcommand{\gb}{\beta}
\newcommand{\gga}{\gamma}
\newcommand{\ul}{\underline}
\newcommand{\ol}{\overline}
\newcommand{\Lrraro}{\Longrightarrow}
\newcommand{\equi}{\Longleftrightarrow}
\newcommand{\gt}{\theta}
\newcommand{\op}{\oplus}
\newcommand{\Op}{\bigoplus}
\newcommand{\CR}{{\cal R}}
\newcommand{\tr}{\bigtriangleup}
\newcommand{\grr}{\omega_1}
\newcommand{\ben}{\begin{enumerate}}
\newcommand{\een}{\end{enumerate}}
\newcommand{\ndiv}{\not\mid}
\newcommand{\bab}{\bowtie}
\newcommand{\hal}{\leftharpoonup}
\newcommand{\har}{\rightharpoonup}
\newcommand{\ot}{\otimes}
\newcommand{\OT}{\bigotimes}
\newcommand{\bwe}{\bigwedge}
\newcommand{\gep}{\varepsilon}
\newcommand{\gs}{\sigma}
\newcommand{\OO}{_{(1)}}
\newcommand{\TT}{_{(2)}}
\newcommand{\FF}{_{(3)}}
\newcommand{\minus}{^{-1}}
\newcommand{\CV}{\cal V}
\newcommand{\CVs}{\cal{V}_s}
\newcommand{\slp}{U_q(sl_2)'}
\newcommand{\olp}{O_q(SL_2)'}
\newcommand{\slq}{U_q(sl_n)}
\newcommand{\olq}{O_q(SL_n)}
\newcommand{\un}{U_q(sl_n)'}
\newcommand{\on}{O_q(SL_n)'}
\newcommand{\ct}{\centerline}
\newcommand{\bs}{\bigskip}
\newcommand{\qua}{\rm quasitriangular}
\newcommand{\ms}{\medskip}
\newcommand{\noin}{\noindent}
\newcommand{\raro}{\rightarrow}
\newcommand{\alg}{{\rm Alg}}
\newcommand{\rcom}{{\cal M}^H}
\newcommand{\lcom}{\,^H{\cal M}}
\newcommand{\rmod}{\,_R{\cal M}}
\newcommand{\qtilde}{{\tilde Q^n_{\gs}}}
\nc{\e}{\overline{E}} \nc{\K}{\overline{K}} \nc{\gL}{\Lambda}
\newcommand{\tie}{\bowtie}
\newcommand {\h}{\widehat}
\newcommand {\tl}{\tilde}
\newcommand{\tri}{\triangleright}
\nc{\ad}{_{\dot{ad}}}
\nc{\coad}{_{\dot{{\rm coad}}}}
\nc{\ov}{\overline}

\title{Are we counting or measuring something?}
\author{Miriam Cohen}
\address {Department of Mathematics\\
Ben Gurion University, Beer Sheva, Israel}
\email {mia@math.bgu.ac.il}
\author{Sara Westreich}
\address{Department of Management\\
Bar-Ilan University,  Ramat-Gan, Israel}
\email{swestric@biu.ac.il}
\thanks {This research was supported by the ISRAEL SCIENCE FOUNDATION, 170-12.}

\subjclass[2000]{Primary 16T05}

\date{Apr-2013}
\begin{abstract}
Let $H$ be a semisimple Hopf algebras over an algebraically closed field $k$ of characteristic $0.$ We define Hopf algebraic analogues of  commutators and their generalizations and show how they are related to $H',$ the Hopf algebraic analogue of the commutator subgroup.  We introduce a family of central elements of $H',$ which on one hand generate $H'$ and on the other hand give rise to a family of functionals on $H.$ When $H=kG,\,G$ a finite group, these functionals are counting functions on $G.$ It is not clear yet to what extent they measure any specific invariant of the Hopf algebra. However, when $H$ is quasitriangular they are at least characters on $H.$
\end{abstract}
\maketitle
\section*{Introduction}
Commutators and commutator subgroups  are some of the most fundamental concepts in group theory. These subgroups measure how far the group is from being commutative. It was Frobenius who proved early on that a function on a finite group $G,$ that counts the number of ways an element of $G$ can be realized as a commutator, is a character of $G.$ This was done by giving an explicit formula for this counting function. Generalizations of this formula were proved throughout the years (e.g. \cite{ta,km,av}). The approaches varied from a direct approach through the use of a symmetric bilinear form and its associated Casimir element, to  the use of distribution functions which are uniform on conjugacy classes of $G.$

In this paper we define Hopf algebraic analogues of  commutators and their generalizations and show how they are related to $H',$ the Hopf algebraic analogue of the commutator subgroup.  We introduce a family of elements in $H'$ denoted by $z_n,\,n>1,$ which arise from the idempotent integral of $H.$ This family consists of powers of the $S$-fixed central invertible element $z_2,$
$$z_2=\sum \gL_1\gL'_1S\gL_2S\gL'_2$$
where $\gL,\gL'$ are two copies of the idempotent integral of $H.$

The elements $\{z_n\}$
  are shown  to give rise to Hopf algebraic analogues of the various counting functions for groups. On the other hand they are shown to be central Casimir elements associated to certain symmetric bilinear forms and Higman maps on $H$ or on its center.

A different characterizations of $z_n$ is given when $H$ is also assumed to be almost cocommutative. In this situation the  $z_n$'s are related to so called {\it generalized class sums} for $H.$ Information about functionals related to iterated commutators can be deduced from the character table of $H.$

\bigskip We use the following notations. Let $H$ be a $d$-dimensional semisimple Hopf algebras over an algebraically closed  field $k$ of characteristic $0$ with an idempotent integral $\gL.$ Denote by $\{E_i\}_{0\le i\le n-1}$ the set of primitive central idempotents of $H.$
Let $d_i$ denote the degree of the irreducible character $\chi_i$ and let $R(H)={\rm Sp}_k\{\chi_i\}.$ Then $\chi_H=\gl=\sum d_i\chi_i$ is an integral for $H^*.$

Let $\Psi :H_{H^*}\rightarrow H^*_{H^*}$ be the Frobenius map given by:
$$\Psi(h)=\gl\leftharpoonup S(h).$$

The commutator subalgebra $H'$  is a normal left coideal subalgebra of  $H$ for which  $H/(H{H'}^+)$ is commutative and it is minimal with respect to this property.

\medskip In Section $2$ we define the commutator $\{a,b\}$ for $a,b\in H$   as follows:
$$\{a,b\}=\sum a_1b_1Sa_2Sb_2.$$
We define also the general commutator $\{a^1\dots ,a^n\},\;a^i\in H$ as follows:
$$ \{a^1,\dots, a^n\}=\sum a^1_1\cdots a^n_1Sa^1_2\cdots Sa^n_2.$$
We show that the general commutator can always be obtained as a sum of products of commutators.

Of special interest are commutators related to the idempotent integral $\gL$ of $H.$
Let $\gL^i$ be a copy of the idempotent $\gL.$ Set
$$z_n=\sum\gL^1_1\cdots\gL^n_1S\gL^1_2\cdots S\gL^n_2\qquad z_0=1.$$

We show:

\medskip\noin{\bf Theorem \ref{central22}}:Let $H$ be a semisimple Hopf algebra over an algebraically closed field $k$ of characteristic $0.$ Then for all $k,n\ge 0,$
$$z_{2k+1}=z_{2k},\qquad z_n=z_2^{\frac{n-n({\rm mod}2)}{2}}\in Z(H).$$

\medskip The commutator $z_2$ has a very nice form:

\medskip\noin{\bf Theorem \ref{z2}}:
Let $H$ be a semisimple Hopf algebra over an algebraically closed field $k$ of characteristic $0.$  Then  $$z_2=\sum \gL^1_1\gL_1^2S\gL^1_2S\gL_2^2=\sum_i \frac{1}{d_i^2} E_i\in Z(H).$$

\medskip The elements $z_n$ serve as   indicators for the commutativity of $H$ as follows:

\medskip\noin{\bf Theorem \ref{z2com}}:
Let $H$ be a semisimple Hopf algebra over an algebraically closed field $k$ of characteristic $0.$ Then $H$ is commutative if and only if $z_n\in k$ for some $n\ge 2,$ if and only if $z_n\in k$ for all $n\ge 2.$

\medskip In fact $z_n$ generates $H'$ in the following sense:

\medskip\noin{\bf Theorem \ref{z2com2}}:
Let $H$ be a semisimple Hopf algebra over an algebraically closed field $k$ of characteristic $0$ and let $n\ge 2.$ Then the commutator sublagebra of $H$ is the algebra generated by the left coideal
$$H'_n=z_n\leftharpoonup H^*.$$

\medskip In Section $3$ we discuss ``counting functions'' for $H.$  An important set of functionals on finite groups are the so called counting functions.  As in \cite{av}, let $w$ be a word in the free groups on $r$ letters. Substituting $(g_1,\cdots,g_r)\rightarrow w(g_1,\dots,g_r)$ defines a natural function from $G^r$ to $G.$ Let $N_{w}(g)$ denote the number of solutions to $w(g_1,\dots,g_r)=g.$ The function $N_w$ is a class function, called a counting function.

We discuss analogues of the four following counting functions: (i)Root functions. (ii) Frobenius counting function for commutators. (iii) Counting functions for generalized commutators. (iv) Counting functions for iterated commutators.

The Frobenius counting function for commutators,
$$f_{rob}=\sum_i \frac{d}{d_i}\chi$$
is realized in our setup as:
$$f_{rob}=d\Psi(z_2).$$

More generally, the convolution power of $f_{rob}$ is the counting function for the generalized commutators, and for products of commutators. The convolution product $\bullet$ inside $H^*$ is defined for $p,q\in H^*$ as follows:
$$p\bullet p'=d\Psi(\Psi\minus(p)\Psi\minus(q)).$$

We show:

\medskip\noin{\bf Theorem \ref{bulletl}:}
Let $H$ be a semisimple Hopf algebra over an algebraically closed field $k$ of characteristic $0.$  Then for any $l>0,$
$$d^{2l-1}\Psi(z_{2l})=\sum_i\frac{d^{2l-1}}{d_i^{2l-1}}\chi_i=\underbrace{f_{rob}\bullet f_{rob}\cdots\bullet f_{rob}}_{l}.$$
The function in the middle is called $f_n$ and for groups it equals the function that counts general commutators of length $2n$ or $2n+1.$ It also equals the function that counts products of $n$ commutators.

\medskip
The commutators $z_n$ are realized also as Casimir elements for specific symmetric forms on $H$ and on $Z(H).$

\medskip The analogue of the counting function for iterated commutators of a group  is defined as:
$$f_{\{\{x,y\},z\}}=d^2\sum_{i,j} \frac{1}{d_id_j}\langle \chi_i s(\chi_i)\chi_j,\gL\rangle\chi_i.$$

We show:

\medskip\noin{\bf Theorem \ref{z2lamda}}:
Let $H$ be a semisimple hopf algebra over an algebraically closed fiels of characteristic $0.$  Then:
$$f_{\{\{x,y\},z\}}=d^2\sum_{i,j} \frac{1}{d_i}\langle \chi_i s(\chi_i),z_2\rangle\chi_i=d^2\Psi(\{z_2,\gL\}).$$

\medskip

In Section $4$ we focus on almost cocommutative Hopf algebras. Let $\{\eta_j\}$ be the set of normalized class sums of $H.$

Observe that for a group $G,$ we have $g$ is a commutator if and only if $f_{rob}(g)\ne 0.$ Moreover, $g\in G'$ if and only if $f_n(g)\ne 0$ for some $n\ge 1.$ Similarly, $g$ is an iterated commutator if and only if $f_{\{\{x,y\}z\}}(g)\ne 0.$   Hence also for a class sum $C,\;f_{rob}(C)\ne 0$ (respectively $f_n(C)\ne 0,\, f_{\{\{x,y\}z\}}(C)\ne 0$) if and only if $C$ is a sum of commutators ( respectively $C\in kG',\;C$ is a sum of iterated commutators).

\medskip In this spirit we show:

\medskip\noin {\bf Theorem \ref{eta}}:
Let $H$ be an almost cocommutative semisimple Hopf algebra over an algebraically closed field of characteristic $0,$ let $\eta_j$ be a normalized class sum and let $f_n$ be as in \eqref{fng}. Then
$$\eta_j\in z_2^n\leftharpoonup H^*\Longleftrightarrow \langle f_n,\eta_j\rangle\ne 0.$$

\medskip For Hopf algebras that satisfy the condition in Kaplansky's conjecture,  that is, if $d_i|d$ for all $i,$ we have:

\medskip\noin{\bf Theorem \ref{kap}}:
Let $H$ be a semisimple Hopf algebra over an algebraically closed field of characteristic $0,$ and assume $d_i|d$ for any irreducible $H$-module $V_i.$ Then the  functions $f_{rob},\,f_n$ and $f_{\{\{x,y\},z\}}$  are characters.

\section{Preliminaries} Throughout this paper $H$ is a $d$-dimensional Hopf algebra over an algebraically closed field $k$ of characteristic $0.$  We denote by $S$ and $s$ the antipodes of $H$ and $H^*$ respectively and $\gL$ and $\gl$ the left and right integrals of $H$ and $H^*$ respectively so that $\left\langle \gl,\gL \right\rangle  = 1.$
Denote by
 $Z(H)$ the center of $H.$

For any  algebra $A,\;A^*=\hom(A,k)$  becomes a right and left $A$-module by the {\it hit} actions $\leftharpoonup$ and $\rightharpoonup$ defined for all $a\in A,\,p\in A^*,$
$$\left\langle p\leftharpoonup a,a'\right\rangle =\left\langle p,aa'\right\rangle \qquad \left\langle a\rightharpoonup p,a'\right\rangle =\left\langle p,a'a\right\rangle $$

In particular if $A=H$ is a finite dimensional Hopf algebras then $H^*$ is an algebra and thus $H$ becomes a left and right  $H^*$ module.

Denote by $_{\dot{ad}} $ the left adjoint action of $H$ on itself, that is, for all $a,h\in H,$
$$h_{\dot{ad}}  a=\sum h_1aS(h_2).$$
Then $$\gL\ad H\subset Z(H)$$ and if $H$ is semisimple then equality holds.

A subalgebra $A$ of $H$ is called {\bf normal} if it is stable under the left adjoint action.

Let $D(H)$ denote the Drinfeld double of the Hopf algebra $H.$ It is not hard to see that $H$ is a $D(H)$-module with respect to the adjoint action of $H$ on itself and the right {\rm hit} action of $H^*$ on $H.$

\medskip
Denote by $R(H)$ the $k$-span of all irreducible characters of $H.$  It is an algebra (called the character algebra) which is contained in the algebra of all cocommutative elements of $H^*$ and equal to it when $H$ is semisimple.

Recall that $H$ is a Frobenius algebra. One defines a Frobenius map $\Psi
:H_{H^*}\rightarrow H^*_{H^*}$ by
\begin{equation}\label{psi}\Psi(h)=\gl\leftharpoonup S(h)\end{equation}
where $H^*$ is a right $H^*$-module under multiplication and $H$ is a right $H^*$-module under right {\it hit}. If $H$ is semisimple then
$$\Psi(Z(H))=R(H).$$

For a finite-dimensional Hopf algebra $H$ we have for all $p\in H^*,$
$$\Psi\minus(p)=\gL\leftharpoonup p,$$

Let $H$ be a semisimple Hopf algebra over $k.$ Let  $\{k=V_0,\dots V_{n-1}\}$  be a complete set of non-isomorphic irreducible $H$-modules, ${\rm Irr}(H)=\{\gep=\chi_0,\dots\chi_{n-1}\}$ the corresponding characters  and  $\{E_0,\dots E_{n-1}\}$  the associated central primitive idempotents of $H$ where $E_0=\gL$ is the idempotent integral of $H.$   Let $\dim V_i=d_i,$ then
$$\gl=\chi_H=\sum_{i=0}^{n-1}d_i\chi_i,$$
where $\chi_H$ denotes the character of the left regular representation.

A well known result of Larson \cite{la} is the orthogonality of characters, that is,
\begin{equation}\label{lacha}
\langle\chi_i s(\chi_j),\gL\rangle =\gd_{ij}.\end{equation}

The orthogonality  of characters implies in particular (see also \cite[Cor.4.6]{sc}):
 \begin{equation}\label{dual1}\left\langle \chi_i,E_j\right\rangle =\gd_{ij}d_j,\quad \Psi(E_i)=d_is(\chi_i),\quad
\chi_i\leftharpoonup E_j=\gd_{ij}\chi_i \end{equation}
for all  $0\le i,j\le n-1.$
In particular, $\{\chi_i\},\,\{\frac{1}{d_j}E_j\}$ are dual bases of $R(H)$  and $Z(H)$ respectively. Hence we have for each $z\in Z(H),\,p\in R(H)$
\begin{equation}\label{z}z=\sum_i \frac{1}{d_i}\left\langle \chi_i,z\right\rangle E_i,\qquad p=\sum_i\frac{1}{d_i}\langle p,E_i\rangle\chi_i.\end{equation}
By \eqref{dual1} we have
\begin{equation}\label{act}
\chi_i\leftharpoonup z = \frac{1}{d_i}\left\langle \chi_i,z\right\rangle\chi_i\end{equation}
for all $i.$

\medskip A finite-dimensional algebra $A$ over $k$ is a {\bf symmetric algebra} if there exists a non-degenerate associative symmetric bilinear form $\gb:A\ot A\rightarrow k.$

It is well known there exists a bijective correspondence between symmetric bilinear forms $\gb$ on $A$ and elements $t_{\gb}\in A^*$ so that  $\langle t_{\gb},ab\rangle=\langle t_{\gb},ba\rangle$ for all $a,b\in A$ and $t_{\gb}\leftharpoonup A=A^*,$ (that is $t_{\gb}$ is a generator of $A^*$ as an $A$-module).    The correspondence is given as follows: Given $t,$ a genrator of $A^*,$ we define $\gb_t(a,b)=\langle t,ab\rangle$ and conversely, given $\gb$ the the associated generator is $t_{\gb}\in A^*$ by $\langle t_{\gb},a\rangle=\gb(a,1).$

For each $\gb$ there exists $\{r_i\},\{l_i\}$ which forms a dual basis for $\gb,$ that is $\gb(r_i,l_j)=\gd_{ij},$ for $1\le i,j\le \dim A.$

The  {\bf Casimir element} with respect to $\gb$  is
$$\sum_ir_i\ot l_i=\sum l_i\ot r_i.$$
It does not depend on the choice of the dual basis for $\gb.$ Moreover, for all $a\in A,$
\begin{equation}\label{ccasimir}
\sum_ir_ia\ot l_i=\sum r_i\ot al_i\quad{and}\quad\sum ar_i\ot l_i=\sum r_i\ot l_ia.
\end{equation}

The  {\bf central Casimir} element with respect to $\gb$ denoted by $Cas_{\gb}$ is
$$Cas_{\gb}=\sum_ir_il_i\in Z(A).$$

Any other symmetric form $\gb'$ with corresponding $t',\,r_i',\,l_i'$ satisfies $t'=t\leftharpoonup u$ where $u$ is an invertible element in the center of $A$ and
\begin{equation}\label{connection} r_i'=r_iu\minus,\;l_i'=l_i,\;Cas_{\gb'}=\sum r_iu\minus l_i.\end{equation}

Any symmetric form $\gb$ defines an Higman map $\tau_{\gb}:H\rightarrow Z(H)$ by:
\begin{equation}\label{tau}\tau(h)=\sum r_ihl_i.\end{equation}

Finite-dimensional semisimple algebras are always symmetric. Also, if $H$ be a semisimple Hopf algebra over an algebraically closed field $k$ then it was shown in\cite{os} that  $H$ has a natural symmetric bilinear form $\gb$ given by:
\begin{equation}\label{bilinear}\gb(h,h')=\left\langle\gl,hh'\right\rangle,\end{equation}
for all $h,h'\in H.$  The  corresponding Casimir element is

\begin{equation}\label{casimir}Cas_{\gb}=\sum\gL_1\ot S\gL_2.\end{equation}

It follows from \eqref{ccasimir} (it can also be implied directly from  \cite[Lemma 1.2]{lr}) that for all $a\in H,$
\begin{eqnarray}\label{lara}
(i)&&\sum \gL_1\ot aS\gL_2=\sum \gL_1a\ot S\gL_2 \\
\nonumber (ii)&&\sum a\gL_1\ot S\gL_2=\sum \gL_1\ot S\gL_2a.\end{eqnarray}

\section{Commutators for Hopf algebras}

The commutator subalgebra $H'$ of a semisimple Hopf algebra $H$ was first defined in \cite{bu1}. It is a normal left coideal subalgebra of  $H$ for which  $H/(H{H'}^+)$ is commutative and it is minimal with respect to this property. For $H=kG$ one has $H'=kG'.$

Based on \cite[Prop.1.14]{cw5}, it  is not hard to see that
\begin{equation}\label{comma}H'= \{h\in H\,|\,\gs\rightharpoonup h = h\;\forall \gs\in G(H^*)\}.\end{equation}

Generalizing from groups we describe $H'$ in terms of Hopf algebraic commutators. Let $H$ be any Hopf algebra over $k.$ For $a,b\in H,$ define their commutator $\{a,b\}$ as:
\begin{equation}\label{comh}\{a,b\}=\sum a_1b_1Sa_2Sb_2.\end{equation}
It is straightforward to check that
\begin{equation}\label{com1}ab=\sum\{a_1,b_1\}b_2a_2\end{equation} for all $a,b\in H.$
Define
\begin{equation}\label{com}\it{Com}={\rm span}_k\{\{a,b\}\,|\,a,b\in H\}.\end{equation}

\begin{lemma}\label{coideal} Let $H$ be a Hopf algebra over $k,$ then  $\it{Com}$ is a left coideal of $H.$
\end{lemma}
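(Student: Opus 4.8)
The plan is to verify the defining property of a left coideal directly. Recall that a subspace $C \subseteq H$ is a left coideal precisely when $\Delta(C) \subseteq H \otimes C$. Since $\it{Com}$ is the $k$-span of the commutators $\{a,b\}$ and $\Delta$ is $k$-linear, it suffices to prove $\Delta(\{a,b\}) \in H \otimes \it{Com}$ for all $a,b \in H$. Thus the entire argument reduces to a single computation of $\Delta(\{a,b\})$.

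First I would apply $\Delta$ to $\{a,b\} = \sum a_1 b_1 S(a_2) S(b_2)$. Using that $\Delta$ is an algebra homomorphism and that the antipode reverses coproducts, $\Delta(S(x)) = \sum S(x_2) \otimes S(x_1)$, I would expand the coproducts of the four factors and multiply componentwise in $H \otimes H$. Organizing the bookkeeping through the threefold coproduct $\Delta^{(2)}(a) = \sum a_1 \otimes a_2 \otimes a_3$ (and similarly for $b$), the computation is arranged to land on
\begin{equation*}
\Delta(\{a,b\}) = \sum a_1 b_1 S(a_3) S(b_3) \otimes \{a_2, b_2\}.
\end{equation*}

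The crucial step is the recognition of the second tensor factor as a commutator. After the expansion the right-hand slot reads $\sum (a_2)_1 (b_2)_1 S((a_2)_2) S((b_2)_2)$, where the two inner legs are $\Delta(a_2) = \sum (a_2)_1 \otimes (a_2)_2$ and $\Delta(b_2) = \sum (b_2)_1 \otimes (b_2)_2$ by coassociativity; this is exactly $\{a_2, b_2\}$, hence an element of $\it{Com}$. Therefore $\Delta(\{a,b\}) \in H \otimes \it{Com}$, and extending linearly over the spanning set yields $\Delta(\it{Com}) \subseteq H \otimes \it{Com}$, as required.

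The only subtle point, and the step I expect to be the main obstacle, is the Sweedler bookkeeping: one must expand enough coproducts, keep careful track of which legs carry the antipode, and then reassemble the two inner legs as the single coproduct $\Delta(a_2)$ (and $\Delta(b_2)$). Everything else, namely multiplicativity of $\Delta$, the anti-comultiplicativity of $S$, and the extension by linearity, is routine. It is worth noting that the first tensor factor $\sum a_1 b_1 S(a_3) S(b_3)$ is not in general a commutator, since the antipodes fall on the outer legs while only the inner legs recombine; this asymmetry is precisely why $\it{Com}$ comes out as a left coideal rather than a right or two-sided one, matching the statement of the lemma.
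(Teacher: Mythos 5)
Your proof is correct and is essentially the paper's own argument: the paper's entire proof is the single identity $\gD(\{a,b\})=\sum a_1b_1Sa_3Sb_3\ot\{a_2,b_2\}\in H\ot\it{Com}$, which is exactly the computation you carry out (with the Sweedler bookkeeping made explicit). No gaps.
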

\begin{proof}
For each commutator $\{a,b\},$
\begin{equation}\label{left}\gD(\{a,b\})=\sum a_1b_1Sa_3Sb_3\ot \{a_2,b_2\}\in H\ot \it{Com}.\end{equation}
\end{proof}

As a result we show:

\begin{proposition}\label{h'} Let $H$ be a semisimple Hopf algebra then the commutator subalgebra $H'$ of $H$ is the algebra generated by $\it{Com}.$ Thus $H$ is commutative if and only if $\it{Com}=k.$
\end{proposition}
\begin{proof}

Let $N$ denote the algebra generated by $\it{Com}.$ Since $\it{Com}$ is a left coideal, it follows that $N$ is a left coideal subalgebra.

Next we show that $N$ is normal. Since $H$ is an $H$-module algebra under the left adjoint action it is enough to check it on {\it Com}. Indeed, by \eqref{com1} we have for all $h,x,y\in H,\,$
$$\sum \underbrace {h_1\{x,y\}}_{a}\underbrace{Sh_2}_{b}=\sum \{h_1\{x,y\}_1,Sh_4\}Sh_3h_2\{x,y\}_2=\sum \{h_1\{x,y\}_1,Sh_2\}\{x,y\}_2.$$
The first term is a commutator by definition, while the second one belongs to $\it{Com}$ by Lemma \ref{coideal}.

Now, for any  $\gs\in G(H^*),$   we have:
$$\gs\rightharpoonup\sum a_1b_1Sa_2Sb_2=\sum\left\langle \gs,a_2b_2Sa_3Sb_3\right\rangle a_1b_1Sa_4Sb_4=\sum a_1b_1Sa_2Sb_2$$
The last equality follows from the fact that $\gs$ is multiplicative on $H.$ By the definition of $H'$ we have that $\it{Com}\subset H'$ and since $H'$ is an algebra, we have also $N\subset H'.$

Let $\ol{H}=H/HN^+.$ Then by \eqref{com1} we have  for all $\ol{a},\ol{b}\in  \ol{H},$
$$\ol{ab}=\sum\{\ol{a_1,b_1}\}\ol{b_2a_2}.$$ But $\{\ol{a_1,b_1}\}=\langle\gep, a_1b_1\rangle,$ hence
$\ol{ab}=\ol{ba}.$
\end{proof}

Of special interest is the following commutator:
\begin{lemma}\label{zlamda}
Let  $z\in Z(H).$ Then $\{z,\gL\}\in Z(H).$
\end{lemma}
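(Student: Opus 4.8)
The plan is to show that $\{z,\gL\}$ lies in the center by exploiting the fact that $z$ is central together with the defining commutator formula and the Casimir-type identities \eqref{lara}. First I would expand $\{z,\gL\}=\sum z_1\gL_1 Sz_2 S\gL_2$ and use centrality of $z$ to simplify. Since $z\in Z(H)$ commutes with everything, I expect that the factors coming from $z$ can be slid past the factors coming from $\gL$, and ultimately collapsed using that $z$ behaves like a scalar on each central component. A cleaner route is to observe that by Theorem \ref{z2} the central elements of interest are spanned by the primitive idempotents $E_i$, on which $z$ acts by a scalar; so it suffices to verify the claim when $z=E_i$, or even more simply to reduce to checking the claim for $z=1$, namely that $\{1,\gL\}=\sum \gL_1 S\gL_2=\gep(\gL)1\in k\subseteq Z(H)$, and then promote this by centrality.

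The more structural approach I would actually carry out uses the adjoint-action description of the center. Recall that $\gL\ad H = Z(H)$ for semisimple $H$, and that $H'$ and $\it{Com}$ are built from commutators. The key algebraic step is to rewrite $\{z,\gL\}$ so that the integral's comultiplication is isolated, then invoke \eqref{lara} to move a factor of $z$ from one tensor leg to the other inside $\sum\gL_1\ot S\gL_2$. Concretely, I would compute, for arbitrary $h\in H$,
\begin{equation}\label{plancenter}
h\{z,\gL\}=\sum h z_1\gL_1 Sz_2 S\gL_2,\qquad \{z,\gL\}h=\sum z_1\gL_1 Sz_2 S\gL_2 h,
\end{equation}
and aim to show these agree. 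Using $z\in Z(H)$ to pull $z_1$ and $Sz_2$ into scalar position on the relevant idempotent blocks, the problem reduces to the identity $\sum \gL_1 h\ot S\gL_2=\sum h\gL_1\ot S\gL_2$ and its mirror in \eqref{lara}, which let me transfer $h$ across the $\gL$-factors at the cost of nothing, since $\gep(\gL)=1$.

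The main obstacle will be bookkeeping of the coproduct indices: the commutator $\{z,\gL\}$ mixes the two comultiplications $\gD z$ and $\gD\gL$, and naively one cannot directly apply \eqref{lara}, which is stated only for the single integral $\gL$. I expect the crux to be reorganizing the expression so that $z$ appears \emph{undivided} (i.e. eliminating $\gD z$ in favor of $z$ itself), which should be possible precisely because $z$ is central and hence, by \eqref{z} and Theorem \ref{z2}, a linear combination of the idempotents $E_i$; for such $z$ one has $\gD E_i\cdot(1\ot S\text{-leg})$ collapsing cleanly. Once $z$ is treated as a scalar multiple of an idempotent on each block, centrality of $\{z,\gL\}$ follows from centrality of $z_2=\sum_i d_i^{-2}E_i$ established in Theorem \ref{z2} and the behavior of $\gL$ under \eqref{lara}. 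Thus the real work is the reduction to idempotents; after that the conclusion $\{z,\gL\}\in Z(H)$ is immediate.
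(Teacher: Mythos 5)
Your overall strategy --- proving $h\{z,\gL\}=\{z,\gL\}h$ for all $h\in H$ by combining the centrality of $z$ with the Casimir identities \eqref{lara} --- is exactly the paper's, but the mechanism you propose for closing the computation has a genuine gap. The crux of your plan is to ``eliminate $\gD z$ in favor of $z$ itself,'' treating $z_1$ and $Sz_2$ as scalars on idempotent blocks because $z$ is a combination of the $E_i.$ Centrality of $z$ gives no control whatsoever over $\gD z$: the coproduct $\gD E_i$ does not ``collapse cleanly,'' and $\{z,\gL\}$ depends essentially on $\gD z,$ not on $z$ as an operator. Indeed Theorem \ref{z2lamda} shows $\{z_2,\gL\}$ is in general a nonscalar central element (its image under $\Psi$ is $\sum_i d_i^{-1}\langle\chi_is(\chi_i),z_2\rangle\chi_i$), so your proposed reduction to $z=1$ --- where $\{1,\gL\}=\gep(\gL)1$ --- followed by ``promotion by centrality'' cannot recover it; linearity does reduce you to $z=E_i,$ but that case is no easier than the general one. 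A second concrete error: the identity you invoke, $\sum\gL_1h\ot S\gL_2=\sum h\gL_1\ot S\gL_2,$ is not one of the identities \eqref{lara} and is false; multiplying the two tensor legs gives $\gL\ad h$ on one side and $h\,\gep(\gL)=h$ on the other, which differ whenever $h\notin Z(H).$

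What actually makes the argument work --- and what is missing from your plan --- is that centrality of $z$ is used only through $\gD(hz)=\gD(zh),$ i.e.\ $\sum h_1z_1\ot h_2z_2=\sum z_1h_1\ot z_2h_2,$ never through any simplification of $\gD z$ alone. Concretely: insert $\sum h_1\ot Sh_2h_3=h\ot1$ to write $h\{z,\gL\}=\sum h_1z_1\gL_1S(z_2)S(h_2)h_3S\gL_2$ (using $S^2=\Id$), use the diagonal commutation above to replace $h_1z_1\ot h_2z_2$ by $z_1h_1\ot z_2h_2,$ apply \eqref{lara}(i) with $a=h_3$ to slide $h_3$ next to $\gL_1$ so that $h_3Sh_2$ collapses to $\gep(h_2)1,$ and finally observe that what remains is $\sum z_1h\,(\gL\ad Sz_2),$ where each $\gL\ad Sz_2$ lies in $Z(H),$ so $h$ passes to the right and yields $\{z,\gL\}h.$ At no point is $z$ ``undivided''; the step you identify as the real work is precisely the step that cannot be carried out.
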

\begin{proof}
For all $h\in H$ we have:
\begin{eqnarray*}
\lefteqn{h\sum z_1\gL_1Sz_2S\gL_2=}\\
&=& \sum h_1 z_1\gL_1S(Sh_3h_2z_2)S\gL_2\\
&=& \sum z_1h_1 \gL_1S(Sh_3z_2h_2)S\gL_2\qquad(\text{since }\,z\in Z(H))\\
&=& \sum z_1h_1 \gL_1Sh_2Sz_2h_3S\gL_2\\
&=&\sum z_1h \gL_1Sz_2S\gL_2\qquad\text{(by \eqref{lara}(i) with $a=h_3$)}\\
&=&\sum z_1\gL_1Sz_2S\gL_2h\qquad(\text{since }\,\gL\cdot Sz_2\in Z(H))
\end{eqnarray*}
\end{proof}

\medskip
We consider now generalized commutators.
\begin{definition} For Hopf algebras $H$ and $n>1,$ the {\bf $n$-th commutator} $\{a^1,\dots, a^n\}$ where $a^1,\dots a^n\in H,$ is given by:
$$ \{a^1,\dots, a^n\}=\sum a^1_1\cdots a^n_1Sa^1_2\cdots Sa^n_2.$$
Define also the subspace generated by $n$-th commutators:
$$\it{Com}_n={\rm span}_k\{\{a^1,\dots, a^n\}\,|\, a^i\in H\;\text{for all}\; 1\le i\le n\}.$$
\end{definition}
Note $\it{Com}_2$ coincides with the subspace of commutators,  $\it{Com}.$ Moreover, $\it{Com}_n\subseteq \it{Com}_m$ for all $2\le n\le m.$ Hence proposition \ref{h'} can be generalized and we obtain:
\begin{proposition}\label{hn}
Let $H$ be a semisimple Hopf algebra. Then the commutator subalgebra $H'$ of $H$ is the algebra generated by $\it{Com}_n.$ In particular, every generalized commutator is a linear combination of products of commutators.
\end{proposition}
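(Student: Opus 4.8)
The plan is to prove Proposition \ref{hn} by reducing it to the already-established Proposition \ref{h'}. The key observation is that both propositions assert that $H'$ equals the algebra generated by a certain span of commutators, so it suffices to show that the algebra generated by $\it{Com}_n$ coincides with the algebra generated by $\it{Com}=\it{Com}_2$. The excerpt has already recorded the two facts I need: that $\it{Com}_n\subseteq\it{Com}_m$ for $2\le n\le m$, and that $\it{Com}$ generates $H'$. So the entire content reduces to showing that each $n$-th commutator lies in the algebra generated by ordinary commutators.

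First I would set up an induction on $n$, the base case $n=2$ being trivial. The natural inductive step is to express $\{a^1,\dots,a^n\}$ in terms of $\{a^1,\dots,a^{n-1}\}$ and a single factor $a^n$. The cleanest route is to mimic the identity \eqref{com1}, namely $ab=\sum\{a_1,b_1\}b_2a_2$, but at the level of the generalized commutator. Writing $P=a^1\cdots a^{n-1}$ formally, one checks directly from the definition that
\begin{equation}\label{genrec}
\{a^1,\dots,a^n\}=\sum\{P_1,a^n_1\}\,\{a^1,\dots,a^{n-1}\}\,\text{(evaluated on the appropriate components)},
\end{equation}
so that the $n$-th commutator factors as a product of an ordinary commutator and an $(n-1)$-st commutator, up to reassociating the coproduct legs. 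Concretely I would expand $\sum a^1_1\cdots a^n_1 Sa^1_2\cdots Sa^n_2$, insert $S(a^n_2)a^n_3=\gep(a^n_2)$ style telescoping identities to split off the last pair $(a^n_1,Sa^n_2)$, and recognize the residual expression as a commutator $\{(a^1\cdots a^{n-1})_1,\,a^n_1\}$ multiplied by $\{a^1,\dots,a^{n-1}\}_{(\cdot)}$. Since an ordinary commutator lands in $\it{Com}$ and the shorter generalized commutator lies in the generated algebra by induction, the product lies in the algebra generated by $\it{Com}$.

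The main obstacle is the bookkeeping of the coproduct indices: when the last factor $a^n$ is peeled off, the surviving terms are not literally $\{a^1,\dots,a^{n-1}\}$ but rather its image under the coproduct with extra legs threaded through, and one must verify that after using coassociativity and the antipode axioms these collapse correctly so that what remains is genuinely a product of a commutator with a generalized commutator of length $n-1$. I expect the honest version of \eqref{genrec} to require tracking three or four tensor legs of $a^n$ and applying $\sum S(x_1)x_2=\gep(x)$ at the right place; this is the one spot where the computation is genuinely delicate rather than formal. Once this factorization is in hand, the inclusion ``algebra generated by $\it{Com}_n\subseteq$ algebra generated by $\it{Com}$'' follows, the reverse inclusion is immediate from $\it{Com}=\it{Com}_2\subseteq\it{Com}_n$, and combining with Proposition \ref{h'} gives both the identification with $H'$ and the final assertion that every generalized commutator is a linear combination of products of ordinary commutators.
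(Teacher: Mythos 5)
Your reduction strategy is sound in outline, but the recursive identity you build it on is false as stated. You propose to peel off the \emph{last} factor and write $\{a^1,\dots,a^n\}$ as (roughly) $\sum\{(a^1\cdots a^{n-1})_1,a^n_1\}\cdot\{a^1,\dots,a^{n-1}\}_{(\cdot)}.$ Specialize to a group algebra with grouplike entries: your identity would read $[g,h,k]=[gh,k]\,[g,h],$ i.e.
$$ghkg\minus h\minus k\minus \;=\; ghkh\minus g\minus k\minus\cdot ghg\minus h\minus,$$
which already fails in the free group (the right-hand side is a reduced word of length $10,$ the left of length $6$). The structural reason is visible in the computation you sketch: in $\sum a^1_1\cdots a^{n-1}_1\,a^n_1\,Sa^1_2\cdots Sa^{n-1}_2\,Sa^n_2$ the leg $a^n_1$ is trapped \emph{between} the ascending legs and the $S$-legs, and pushing it to the right via \eqref{com1} deposits a commutator in the middle of the word rather than at either end, so what remains is not a product of an ordinary commutator with a shorter generalized one. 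The fix is to peel off the \emph{first} factor instead:
$$\{a^1,\dots,a^n\}=\sum\{a^1,\;a^2_1a^3_1\cdots a^n_1\}\,\{a^2_2,a^3_2,\dots,a^n_2\},$$
which one checks by expanding $S(a^2_2\cdots a^n_2)=Sa^n_2\cdots Sa^2_2$ and telescoping against $a^2_3\cdots a^n_3$ using $\sum Sx_1x_2=\gep(x)1;$ for groups this is the familiar $[g_1,\dots,g_n]=[g_1,g_2\cdots g_n]\,[g_2,\dots,g_n].$ With that identity your induction closes: $\it{Com}_n\subseteq \it{Com}\cdot\it{Com}_{n-1}$ lands in the algebra generated by $\it{Com},$ and together with $\it{Com}_2\subseteq\it{Com}_n$ and Proposition \ref{h'} this proves the proposition.

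Once repaired, your argument is genuinely different from the paper's and more constructive: the paper never factors generalized commutators at all. It obtains $\it{Com}_n\subseteq H'$ by checking that $\gs\rightharpoonup\{a^1,\dots,a^n\}=\{a^1,\dots,a^n\}$ for every grouplike $\gs$ and invoking the fixed-point description \eqref{comma} of $H',$ and obtains the reverse containment from $\it{Com}_2\subseteq\it{Com}_n$ together with Proposition \ref{h'}; the ``in particular'' clause then falls out because $H'$ is generated by $\it{Com}.$ Your route instead produces an explicit formula writing an $n$-th commutator as a product of two shorter ones --- but only after the recursion is aimed at the correct end.
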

\begin{proof}
Since $\it{Com}_2\subseteq \it{Com}_n$ it follows that the algebra generated by $\it{Com}_2$ is contained in the algebra generated by $\it{Com}_n.$ By Proposition \ref{h'} it follows that $H'$ is contained in the algebra generated by $\it{Com}_n.$ Conversely, a direct computation shows that for any $\gs\in H^*,$
$$\gs\rightharpoonup \{a^1,\dots,a^n\} = \{a^1,\dots,a^n\}.$$ Hence equality holds.

The last part is straightforward.
\end{proof}

\medskip
Of special interest will be commutators and generalized commutators related to the idempotent integral $\gL$ of $H.$
Let $\gL^i$ be a copy of the idempotent $\gL.$ Set
\begin{equation}\label{zn}z_n=\sum\gL^1_1\cdots\gL^n_1S\gL^1_2\cdots S\gL^n_2\qquad z_0=1.\end{equation}
Define a map $Z_n:H\longrightarrow H$ by:
\begin{equation}\label{znh}Z_n(h)= \sum\gL^1_1\cdots\gL^n_1hS\gL^1_2\cdots S\gL^n_2\qquad Z_0(h)=h.\end{equation}

We show now:
\begin{proposition}\label{central2}
Let $H$ be a semisimple Hopf algebra.   Then:

\medskip\noin {\rm 1}. For all $h\in H,\,n>1$
$$Z_n(h)=z_2Z_{n-2}(h).$$

\medskip\noin {\rm 2}. $Z_{2k+1}(h)\in Z(H)$ and $Z_{2k}(h)=z_{2k}h$ for all $k\ge 0,\,h\in H.$
\end{proposition}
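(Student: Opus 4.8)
The plan is to reduce the whole statement to two facts about the single element $z_2$, and to establish those by passing to the irreducible representations. Fix $n>1$ and split off the two outermost copies $\gL^1,\gL^2$ from the rest: writing $P=\gL^3_1\cdots\gL^n_1 h$ and $Q=S\gL^3_2\cdots S\gL^n_2$, these involve only $\gL^3,\dots,\gL^n$, so they are inert for the $\gL^1,\gL^2$ summations, and $PQ=Z_{n-2}(h)$. By definition
$$Z_n(h)=\sum\gL^1_1\gL^2_1\,P\,S\gL^1_2 S\gL^2_2\,Q=Z_2(P)\,Q .$$
Hence Part 1 is exactly the assertion, for all $A\in H$, that $Z_2(A)=z_2A$ (call it $(\star)$), applied with $A=P$ and right-multiplied by $Q$. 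Granting $(\star)$ and $z_2\in Z(H)$, Part 2 follows by induction on $k$: the even case gives $Z_{2k}(h)=z_2Z_{2k-2}(h)=\cdots=z_2^kh$, and since $z_{2k}=Z_{2k}(1)=z_2^k$ this equals $z_{2k}h$; the odd case gives $Z_{2k+1}(h)=z_2^kZ_1(h)$, which lies in $Z(H)$ because $Z_1(h)=\gL\ad h\in Z(H)$ and $z_2$ is central. So everything rests on (i) $z_2\in Z(H)$ and (ii) $(\star)$.

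I would prove both by evaluating in an arbitrary irreducible $\rho_i\colon H\to\End(V_i)$, using the averaging identity
$$\sum\rho_i(\gL_1)\,X\,\rho_i(S\gL_2)=\frac{\langle\chi_i,1\rangle^{-1}\operatorname{tr}(X)}{1}\cdot\frac{1}{d_i}\,\mathrm{id}_{V_i}=\frac{\operatorname{tr}(X)}{d_i}\,\mathrm{id}_{V_i}\qquad(X\in\End(V_i)),$$
which says that $X\mapsto\sum\rho_i(\gL_1)X\rho_i(S\gL_2)$ is the projection of $\End(V_i)$ onto $\End_H(V_i)=k\,\mathrm{id}$; this is where $\gL$ being the integral and Schur's lemma enter, the scalar being pinned down by taking traces. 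Applying it to the inner copy $\gL^1$ with $X=\rho_i(\gL^2_1)$ yields $\rho_i(z_2)=\tfrac1{d_i}M$ with $M:=\sum\langle\chi_i,\gL_1\rangle\rho_i(S\gL_2)$, and the same computation with $X=\rho_i(\gL^2_1A)$ gives $\rho_i(Z_2(A))=\tfrac1{d_i}\sum\langle\chi_i,\gL_1A\rangle\rho_i(S\gL_2)$.

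For (i) I show $M$ is scalar. Feeding \eqref{lara}(i),(ii) through $\chi_i\otimes\rho_i$ gives $\rho_i(a)M=\sum\langle\chi_i,\gL_1a\rangle\rho_i(S\gL_2)$ and $M\rho_i(a)=\sum\langle\chi_i,a\gL_1\rangle\rho_i(S\gL_2)$; since $\chi_i$ is a trace, $\langle\chi_i,\gL_1a\rangle=\langle\chi_i,a\gL_1\rangle$ summand by summand, so $M$ commutes with $\rho_i(H)$ and is scalar by Schur. As $i$ is arbitrary, $z_2\in Z(H)$ (and taking traces with Larson orthogonality \eqref{lacha} even identifies the scalar as $1/d_i^2$). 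For (ii), \eqref{lara}(i) gives $\sum\langle\chi_i,\gL_1A\rangle\rho_i(S\gL_2)=\rho_i(A)M=d_i\,\rho_i(A)\rho_i(z_2)$, whence $\rho_i(Z_2(A))=\rho_i(A)\rho_i(z_2)=\rho_i(z_2)\rho_i(A)=\rho_i(z_2A)$ using the centrality just proved; since this holds in every irreducible and the regular representation of the semisimple $H$ is faithful, $Z_2(A)=z_2A$.

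The main obstacle is the interleaved order of the antipodal legs $S\gL^1_2\cdots S\gL^n_2$ in the definition of $Z_n$: because the $S$-legs of the outer block $\gL^1,\gL^2$ and of the inner block $\gL^3,\dots,\gL^n$ are threaded together rather than nested, one does \emph{not} have a naive factorization $Z_n=Z_2\circ Z_{n-2}$, and a purely element-wise attempt to slide the argument $A$ out to the right using only the Casimir relations \eqref{lara} runs in cyclic circles without ever closing. What breaks the deadlock is precisely the centrality of $z_2$, which allows it to be pulled across the interleaved legs freely; establishing that centrality is therefore the real content, and the representation-theoretic averaging identity (together with the trace property of $\chi_i$) is the cleanest route to it.
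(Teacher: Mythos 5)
Your proof is correct, but it reaches the key identity $Z_2(A)=z_2A$ by a genuinely different route than the paper. The paper proves Part 1 directly by a two-step Sweedler manipulation: writing $Z_n(h)=\sum \gL^1_1\gL^2_1\,a\,S\gL^1_2S\gL^2_2\,R$ with $a=\gL^3_1\cdots\gL^n_1h$ and $R=S\gL^3_2\cdots S\gL^n_2$, it applies \eqref{lara}(i) to $\gL^1$ to move $a$ to the slot right after $\gL^1_1$, then \eqref{lara}(ii) to $\gL^2$ to move it past $S\gL^2_2$, leaving $z_2\,aR=z_2Z_{n-2}(h)$; centrality of $z_2$ is then obtained by the same two moves applied to $hz_2$, and Part 2 follows by induction exactly as in your write-up. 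So your closing claim that a purely element-wise sliding argument ``runs in cyclic circles'' is mistaken --- the interleaved $S$-legs are precisely what \eqref{lara} is built to jump over, and the computation closes after two applications. Your own argument instead passes to each irreducible $\rho_i$ and uses that conjugation-averaging by the idempotent integral is the projection of $\End(V_i)$ onto $\End_H(V_i)=k\,\mathrm{id}$; combined with \eqref{lara} and the trace property of $\chi_i$ this shows $M$ is scalar, whence $z_2\in Z(H)$ and $\rho_i(Z_2(A))=\rho_i(z_2A)$, and faithfulness of $\oplus_i\rho_i$ finishes. This is valid (the normalization $\operatorname{tr}\bigl(\sum\rho_i(\gL_1)X\rho_i(S\gL_2)\bigr)=\operatorname{tr}(X)$ rests on $\sum S(\gL_2)\gL_1=1$, which holds since $S^2=\mathrm{id}$ in the semisimple characteristic-zero case), and it buys you the eigenvalue $\rho_i(z_2)=\frac{1}{d_i^2}\,\mathrm{id}$ essentially for free, i.e.\ Theorem \ref{z2}, which the paper must prove separately; the price is that it is less elementary than the paper's four-line computation. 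One typographical slip: the middle member of your displayed averaging identity evaluates to $\operatorname{tr}(X)/d_i^2$, not the correct $\operatorname{tr}(X)/d_i$ that you actually use thereafter.
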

\begin{proof}

\noin 1.
\begin{eqnarray*}\lefteqn{Z_n(h)=}\\
&=&\sum \gL^1_1\gL^2_1\underbrace{\gL^3_1\cdots\gL^{n}_1h}_{a}S\gL^1_2S\gL^2_2\cdots S\gL^{n}_2\\
&=& \sum \gL^1_1\underbrace{\gL^3_1\cdots\gL^{n}_1h}_a\gL^2_1S\gL^1_2S\gL^2_2\cdots S\gL^{2n}_2\qquad\text{(by \eqref{lara}(i) with $\gL=\gL^1$)}\\
&=& \sum \gL^1_1\gL^2_1S\gL^1_2S\gL^2_2\underbrace{\gL^3_1\cdots\gL^{n}_1h}_{a}S\gL^3_2\cdots S\gL^{2n}_2\qquad\text{(by \eqref{lara}(ii) with $\gL=\gL^2$)}\\
&=&z_2 Z_{n-2}(h)
\end{eqnarray*}

\noin 2. To see the odd case we first show that $z_2\in Z(H).$ Indeed, by \eqref{lara}(ii) with $\gL=\gL^1$ and then \eqref{lara}(i) with $\gL=\gL^2$, we have for all $h\in H,$
$$hz_2=\sum h\gL^1_1\gL^2_1S\gL^1_2S\gL^2_2=\sum \gL^1_1\gL^2_1S\gL^1_2hS\gL^2_2=\sum \gL^1_1\gL^2_1hS\gL^1_2S\gL^2_2=Z_2(h)=z_2h.$$
The last equality follows from part 1.

Since $Z_1(h)=\gL\ad h\in Z(H),$ the result follows  from part 1 by induction on $k.$

The  even case follows by induction since $Z_0(h)=h.$
\end{proof}
As a consequence we obtain the following theorem:
\begin{theorem}\label{central22}
Let $H$ be a semisimple Hopf algebra over an algebraically closed field $k$ of characteristic $0.$ Then for all $k,n\ge 0,$
$$z_{2k+1}=z_{2k}\qquad z_n=z_2^{\frac{n-n({\rm mod}2)}{2}}\in Z(H).$$
\end{theorem}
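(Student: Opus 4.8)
The plan is to derive Theorem~\ref{central22} as a direct consequence of Proposition~\ref{central2}, which has already done the substantive work. The key observation is that setting $h=1$ in the recursion and evaluation formulas of Proposition~\ref{central2} converts statements about the maps $Z_n$ into statements about the elements $z_n$, since $Z_n(1)=z_n$ by comparing \eqref{zn} and \eqref{znh}.

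First I would establish the equality $z_{2k+1}=z_{2k}$. From part~2 of Proposition~\ref{central2} we have $Z_{2k+1}(1)\in Z(H)$, but more usefully I would apply part~1 repeatedly: $Z_n(h)=z_2 Z_{n-2}(h)$ gives $z_n = Z_n(1) = z_2 Z_{n-2}(1) = z_2 z_{n-2}$ for all $n>1$. Iterating this recursion on $z_n = z_2\, z_{n-2}$ downward, the even-indexed elements reduce to a power of $z_2$ landing on $z_0=1$, while the odd-indexed elements reduce to a power of $z_2$ landing on $z_1$. So the computation of $z_1$ is the crux: by definition $z_1=\sum \gL_1 S\gL_2$, which is the Casimir-type element, and one checks $z_1 = \sum\gL_1 S\gL_2 = \gep(\gL)\cdot 1 = 1$ using that $\gL$ is a normalized idempotent integral (so $\gep(\gL)=1$) together with the antipode axiom $\sum\gL_1 S\gL_2 = \gep(\gL)1$. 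Hence $z_1=1=z_0$, which forces the odd and even chains to coincide one step apart, giving $z_{2k+1}=z_2^k = z_2^k\cdot 1=z_2 z_{2k-1}=\cdots$, and matching against the even chain $z_{2k}=z_2^k$ yields $z_{2k+1}=z_{2k}$.

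Next I would assemble the closed form $z_n = z_2^{\frac{n-n(\mathrm{mod}\,2)}{2}}$. The exponent $\frac{n-n(\mathrm{mod}\,2)}{2}$ is simply $\lfloor n/2\rfloor$: for $n=2k$ it equals $k$ and for $n=2k+1$ it also equals $k$. So the uniform formula just repackages the two cases $z_{2k}=z_2^k$ and $z_{2k+1}=z_2^k$ already obtained, and the statement $z_n\in Z(H)$ follows because $z_2\in Z(H)$ (proven inside Proposition~\ref{central2}, part~2) and the center is closed under multiplication. A clean way to present this is induction on $k$ using $z_{2k}=z_2 z_{2k-2}$ with base case $z_0=1$, and separately $z_{2k+1}=z_2 z_{2k-1}$ with base case $z_1=1$.

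I do not expect any serious obstacle, since Proposition~\ref{central2} already contains the nontrivial commutation manipulations via \eqref{lara}. The only point demanding a moment of care is the base identity $z_1=1$, i.e.\ verifying that the normalization conventions (idempotent integral with $\gep(\gL)=1$) make the Casimir element $\sum\gL_1 S\gL_2$ equal to the unit rather than to $\gep(\gL)1$ for a non-normalized integral; once that is pinned down, the theorem is purely a bookkeeping of the recursion.
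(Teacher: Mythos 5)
Your proposal is correct and follows essentially the same route as the paper: take $h=1$ in Proposition~\ref{central2} to get the recursion $z_n=z_2z_{n-2}$, observe $z_1=\sum\gL_1S\gL_2=\gep(\gL)1=1$ by the antipode axiom and the normalization of the idempotent integral, and induct. Your explicit remark that the exponent $\frac{n-n(\mathrm{mod}\,2)}{2}$ is just $\lfloor n/2\rfloor$ is a helpful clarification the paper leaves implicit, but the argument is identical in substance.
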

\begin{proof}
Taking $h=1$ in Proposition \ref{central2} yields $$z_{n}=Z_n(1)=z_2Z_{n-2}(1)=z_2z_{n-2}.$$
By induction $z_{2k}=z_2^k.$ Since $z_1=\sum\gL_1S\gL_2=1$ it follows that $z_3=z_2z_1=z_2$ and $z_{2k+1}=z_{2k}=z_2^k.$
\end{proof}

\medskip

In what follows we present $z_2$ in another form:
\begin{theorem}\label{z2}
Let $H$ be a semisimple Hopf algebra over an algebraically closed field $k$ of characteristic $0.$  Then  $$z_2=\sum \gL^1_1\gL_1^2S\gL^1_2S\gL_2^2=\sum_i \frac{1}{d_i^2} E_i\in Z(H).$$
\end{theorem}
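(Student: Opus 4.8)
The plan is to compute the action of each irreducible character $\chi_i$ on the central element $z_2$ and then reconstruct $z_2$ from these coefficients using the expansion \eqref{z}. Since we already know from Proposition \ref{central2} (part 2, with $k=1$) and the argument in Theorem \ref{central22} that $z_2\in Z(H)$, the formula \eqref{z} tells us
$$z_2=\sum_i \frac{1}{d_i}\langle\chi_i,z_2\rangle E_i,$$
so the whole theorem reduces to proving that $\langle\chi_i,z_2\rangle=1/d_i$ for every $i$. This is the main computation, and it is the step I expect to be the crux.

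To evaluate $\langle\chi_i,z_2\rangle$, I would first recall that $z_2=\{ \gL^1,\gL^2\}=\sum \gL^1_1\gL^2_1 S\gL^1_2 S\gL^2_2$ is built from two copies of the idempotent integral, and that the Frobenius map $\Psi$ together with the orthogonality relations \eqref{dual1} and \eqref{lacha} provide the bridge between central elements and characters. The cleanest route is to relate $z_2$ to the Casimir element \eqref{casimir}, $\sum\gL_1\otimes S\gL_2$, and the relations \eqref{lara}. Concretely, I would try to rewrite $\langle\chi_i,z_2\rangle$ by pairing $\chi_i$ against the product of the two Casimir legs; using that $\chi_i$ is an algebra map's trace and exploiting \eqref{dual1}, namely $\Psi(E_i)=d_i s(\chi_i)$ and $\langle\chi_i,E_j\rangle=\delta_{ij}d_j$, one should be able to isolate a single diagonal contribution. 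An alternative, perhaps more transparent, approach is to compute $\chi_i\leftharpoonup z_2$ using \eqref{act}: since $z_2\in Z(H)$ we have $\chi_i\leftharpoonup z_2=\frac{1}{d_i}\langle\chi_i,z_2\rangle\chi_i$, so identifying the scalar by which $z_2$ acts on the $i$-th isotypic component pins down $\langle\chi_i,z_2\rangle$.

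The key observation I would aim to establish is that $z_2$ acts on the irreducible module $V_i$ as the scalar $1/d_i^2$. Heuristically, $z_2$ is a "double averaging" operator: the inner integral $\gL$ projects onto invariants and the outer commutator structure, after passing through the antipode, contributes a character-value normalization. For the group case $H=kG$ one checks directly that $z_2=\sum_i \frac{1}{|G|^2}\sum_{g}(\text{commutator count})$, which on the representation $V_i$ scales by $1/d_i^2$, matching the target. In the Hopf setting I would verify this by evaluating $\chi_i(z_2 v)$ for $v\in V_i$ and using that both $\gL^1$ and $\gL^2$ act as the projection onto $V_0$-invariants inside their respective tensor factors, each contributing a factor $1/d_i$ through the orthogonality \eqref{lacha}. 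The main obstacle is bookkeeping the two independent copies of $\gL$ correctly through the antipode and the relations \eqref{lara}(i) and (ii), ensuring the two normalization factors multiply to $1/d_i^2$ rather than, say, $1/d_i$ or $1/d_i^{3}$; this is exactly the kind of Sweedler-index manipulation where an off-by-one in which leg the antipode lands would corrupt the exponent, so I would carry it out carefully and cross-check against the group case.

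Once $\langle\chi_i,z_2\rangle=1/d_i$ is in hand, substituting into \eqref{z} immediately gives $z_2=\sum_i \frac{1}{d_i}\cdot\frac{1}{d_i}E_i=\sum_i\frac{1}{d_i^2}E_i$, completing the proof; the first equality in the statement is just the definition \eqref{zn} of $z_2$, and centrality was already shown.
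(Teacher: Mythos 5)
Your overall strategy coincides with the paper's: since $z_2\in Z(H)$ is already established, expand $z_2=\sum_i\frac{1}{d_i}\langle\chi_i,z_2\rangle E_i$ via \eqref{z} and reduce the theorem to showing $\langle\chi_i,z_2\rangle=\frac{1}{d_i}$. But that evaluation is precisely the step you leave undone: you list several routes you ``would'' try, you identify the Sweedler bookkeeping of the two copies of $\gL$ through the antipode as the main obstacle, and you concede you are not certain the exponent comes out as $d_i^{-2}$ rather than $d_i^{-1}$ or $d_i^{-3}$. That computation is the entire content of the theorem, so as written the proposal is a plan rather than a proof. Moreover, the heuristic you offer --- that each copy of $\gL$ contributes a factor $1/d_i$ ``through the orthogonality \eqref{lacha}'' --- misattributes the mechanism: \eqref{lacha} ultimately contributes the factor $1$ (namely $\langle\chi_i s(\chi_i),\gL\rangle=1$), and the two factors of $1/d_i$ arise elsewhere.

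The missing idea is how to decouple the two Sweedler legs of $\gL^2$. Write $z_2=\sum(\gL^1\ad \gL^2_1)S\gL^2_2$ and note that each factor $\gL^1\ad \gL^2_1$ lies in $Z(H)$ because $\gL\ad H\subseteq Z(H)$. Then
$$\langle\chi_i,z_2\rangle=\sum\left\langle\chi_i\leftharpoonup(\gL^1\ad \gL^2_1),S\gL^2_2\right\rangle=\sum\frac{1}{d_i}\left\langle\chi_i,\gL^1\ad \gL^2_1\right\rangle\left\langle\chi_i,S\gL^2_2\right\rangle$$
by \eqref{act}; this is the step that separates the two legs and produces one factor of $1/d_i$. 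Cocommutativity of $\chi_i$ gives $\langle\chi_i,\gL^1\ad \gL^2_1\rangle=\langle\chi_i,\gL^2_1\rangle$, and Larson's orthogonality \eqref{lacha} then collapses $\sum\langle\chi_i,\gL^2_1\rangle\langle s(\chi_i),\gL^2_2\rangle=\langle\chi_i s(\chi_i),\gL\rangle=1$. Hence $\langle\chi_i,z_2\rangle=\frac{1}{d_i}$, and the second $1/d_i$ in the final formula is simply the dual-basis coefficient in \eqref{z}. Your reduction and choice of tools are correct, but without this decoupling argument the proof is incomplete.
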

\begin{proof}
By Theorem \ref{central22}, $z_2\in Z(H).$  Hence,
\begin{eqnarray*}\lefteqn{z_2=\sum (\gL^1\ad \gL_1^2)S\gL_2^2=}\\
&=&\sum_i\frac{1}{d_i}\left\langle\chi_i,(\gL^1\ad \gL_1^2)S\gL_2^2\right\rangle E_i\\
&=& \sum_i\frac{1}{d_i}\left\langle\chi_i\leftharpoonup(\gL^1\ad \gL_1^2),S\gL_2^2\right\rangle E_i\\
&=& \sum_i\frac{1}{d_i^2}\left\langle\chi_i,\gL^1\ad \gL_1^2\right\rangle\left\langle\chi_i,S\gL_2^2
\right\rangle E_i\qquad\text{(by \eqref{act})}\\
&=& \sum_i\frac{1}{d_i^2}\left\langle\chi_i,\gL_1^2\right\rangle\left\langle\chi_i,S\gL_2^2
\right\rangle E_i\qquad\text{(since $\chi_i$ is cocommutative)}\\
&=&\sum_i \frac{1}{d_i^2}\left\langle\chi_is(\chi_i),\gL^2\right\rangle E_i\\
&=&\sum_i\frac{1}{d_i^2}E_i\qquad\text{(by orthogonality of characters, \eqref{lacha})}
\end{eqnarray*}
\end{proof}
As a consequence of Theorem \ref{central22} and Theorem \ref{z2} we obtain:
\begin{coro}\label{z2n}
Let $H$ be a semisimple Hopf algebra.  Then
$$z_{n}=\sum_i \frac{1}{d_i^{n-n({\rm mod}2)}}E_i.$$
\end{coro}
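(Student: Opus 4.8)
The plan is to combine the two preceding results directly, since Corollary \ref{z2n} is a formal consequence of Theorem \ref{central22} and Theorem \ref{z2}. First I would invoke Theorem \ref{z2} to record the base case $z_2=\sum_i \frac{1}{d_i^2}E_i$, and then use Theorem \ref{central22} to reduce the general $z_n$ to a power of $z_2$. The key observation that makes the whole computation trivial is that the $E_i$ are \emph{orthogonal idempotents}: $E_iE_j=\gd_{ij}E_i$. Consequently, raising $z_2$ to any power in $Z(H)$ simply raises each coefficient $\frac{1}{d_i^2}$ to that same power while leaving the idempotents intact.

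The main steps would run as follows. By Theorem \ref{central22} we have $z_n=z_2^{\frac{n-n({\rm mod}2)}{2}}$, so it suffices to compute powers of $z_2$. Writing $m=\frac{n-n({\rm mod}2)}{2}$, I would compute
\begin{equation*}
z_2^{m}=\left(\sum_i \frac{1}{d_i^2}E_i\right)^{m}=\sum_i \frac{1}{d_i^{2m}}E_i,
\end{equation*}
where the second equality uses orthogonality of the $E_i$ repeatedly (all cross terms vanish). Finally I would substitute $2m=n-n({\rm mod}2)$ to obtain exactly the claimed exponent $d_i^{-(n-n({\rm mod}2))}$, yielding
\begin{equation*}
z_n=\sum_i \frac{1}{d_i^{n-n({\rm mod}2)}}E_i.
\end{equation*}

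Since each ingredient is already established, there is essentially no obstacle here; the only point requiring care is the purely bookkeeping one of confirming that the exponent $2m$ agrees with $n-n({\rm mod}2)$ for both parities of $n$, which is immediate from the definition of $m$. It is also worth noting, as a sanity check, that the odd and even cases collapse correctly: for odd $n$ one has $z_n=z_{n-1}$, matching the fact that $n-n({\rm mod}2)=(n-1)-(n-1)({\rm mod}2)$, and both Theorem \ref{central22} and the stated formula respect this identification. Thus the corollary follows in a single short argument.
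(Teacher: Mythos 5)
Your proof is correct and is precisely the argument the paper intends: the corollary is stated there as an immediate consequence of Theorem \ref{central22} and Theorem \ref{z2}, with the power of $z_2$ computed via orthogonality of the primitive central idempotents exactly as you do. Nothing is missing.
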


An important consequence is the following theorem:
\begin{theorem}\label{z2com}
Let $H$ be a semisimple Hopf algebra over an algebraically closed field $k$ of characteristic $0.$ Then $H$ is commutative if and only if $z_n\in k$ for some $n\ge 2,$ if and only if $z_n\in k$ for all $n\ge 2.$
\end{theorem}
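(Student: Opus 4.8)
The plan is to use the explicit formula for $z_n$ established in Corollary \ref{z2n}, namely $z_n=\sum_i \frac{1}{d_i^{n-n(\mathrm{mod}2)}}E_i$, together with the fact that $\{E_i\}$ is a basis of $Z(H)$ and that $z_n\in Z(H)$ by Theorem \ref{central22}. The key observation is that $z_n\in k$ should force all the degrees $d_i$ to coincide, and combined with the dimension count $\sum_i d_i^2=d$ this will pin down $d_i=1$ for all $i$, which is exactly commutativity.

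First I would show that $H$ commutative implies $z_n\in k$ for all $n\ge 2$. If $H$ is commutative then every irreducible module is one-dimensional, so $d_i=1$ for all $i$, whence by Corollary \ref{z2n} we have $z_n=\sum_i E_i=1\in k$. This direction is essentially immediate. The bulk of the work is the reverse implication, and since the chain of equivalences is a cycle, it suffices to prove that $z_n\in k$ for some single $n\ge 2$ already forces commutativity.

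So suppose $z_n=c\cdot 1$ for some scalar $c\in k$ and some fixed $n\ge 2$. Writing $1=\sum_i E_i$ and using that $\{E_i\}$ is a basis of $Z(H)$, the equation $\sum_i \frac{1}{d_i^{m}}E_i=\sum_i c\,E_i$, where $m=n-n(\mathrm{mod}2)\ge 2$, gives $\frac{1}{d_i^{m}}=c$ for every $i$. Hence all the $d_i$ are equal to a common value, say $d_i=d_0$ for all $i$. Now the trivial module $V_0=k$ has $d_0=1$, so in fact $d_i=1$ for all $i$, and therefore $H\cong \bigoplus_i M_{d_i}(k)=\bigoplus_i k$ is commutative as an algebra. (Equivalently, one invokes $\sum_i d_i^2=d$ with all $d_i$ equal and the presence of the one-dimensional trivial character to conclude $d_i=1$.)

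The main obstacle, such as it is, lies in the reverse direction: one must be careful to extract the scalar constraint correctly from the basis expansion and to remember that $E_0=\gL$ corresponds to the trivial character $\chi_0=\gep$ with $d_0=1$, which is what collapses the common value to $1$. Once the presence of the trivial representation is used, the conclusion is forced. I would organize the final argument as the cyclic implication ``$H$ commutative $\Rightarrow z_n\in k$ for all $n\ge 2$ $\Rightarrow z_n\in k$ for some $n\ge 2$ $\Rightarrow H$ commutative,'' where the middle implication is trivial and the two outer ones are handled as above via Corollary \ref{z2n}.
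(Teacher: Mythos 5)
Your proof is correct and follows essentially the same route as the paper: expand $z_n=\sum_i d_i^{-(n-n(\mathrm{mod}2))}E_i$ via Corollary \ref{z2n}, compare coefficients against $1=\sum_i E_i$ to force all $d_i$ equal, and use $d_0=1$ for the trivial module to conclude $d_i=1$ for all $i$. The paper's own proof is the same argument (stated for $z_{2n}$ after reducing via Theorem \ref{central22}), so no further comparison is needed.
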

\begin{proof}
By Theorem \ref{central22} it is enough to consider $z_{2n}.$ If $z_{2n}=\ga\in k,$ then $z_{2n}=\ga\sum E_i,$ hence by  Corollary \ref{z2n},  $\ga=\frac{1}{d_i^{2n}}$ for all $i,$ implying all $d_i$ are equal. Since $V_0=k$ we have $d_0=1$ and so $d_i=1$ for all $i.$ This implies that $H$ is commutative. The converse is trivial.
\end{proof}

\medskip
For a group $G$ we have $z_2=\sum_{g,h\in G}ghg\minus h\minus$ hence it is easy to see that any commutator belongs to $z_2\leftharpoonup kG^*.$ This gives rise to the following question:
\begin{que}\label{questioncon}
Let $H$ be a semisimple Hopf algebra over an algebraically closed field $k$ of characteristic $0.$ Is that true that $${\rm Com}=z_2\leftharpoonup H^*?$$
\end{que}
While the answer to this question is unknown to us, we can show the following:

\begin{theorem}\label{z2com2}
Let $H$ be a semisimple Hopf algebra over an algebraically closed field $k$ of characteristic $0$ and let $n\ge 2.$ Then the commutator sublagebra of $H$ is the algebra generated by the left coideal
$$H'_n=z_n\leftharpoonup H^*.$$
\end{theorem}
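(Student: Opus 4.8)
The plan is to set $N$ to be the subalgebra of $H$ generated by $H'_n = z_n\leftharpoonup H^*$ and to prove $N=H'$ by a double inclusion, the hard half of which will go through the minimality built into the definition of $H'$.

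First I would record that $H'_n$ is a left coideal: it is the cyclic right $H^*$-submodule of $H$ generated by $z_n$ (indeed $(z_n\leftharpoonup p)\leftharpoonup p' = z_n\leftharpoonup(p\ast p')$), and a subspace of $H$ is a left coideal exactly when it is stable under $\leftharpoonup H^*$. Since the product of two left coideals is again a left coideal, $N$ is a left coideal subalgebra. For the inclusion $N\subseteq H'$, note that $z_n=\{\gL^1,\dots,\gL^n\}$ lies in $\mathit{Com}_n\subseteq H'$ by Proposition \ref{hn}; as $H'$ is itself a left coideal subalgebra it is $\leftharpoonup H^*$-stable, so $z_n\leftharpoonup H^*\subseteq H'$, and since $H'$ is an algebra, $N\subseteq H'$.

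For the reverse inclusion $H'\subseteq N$ I would invoke the minimality in the definition of $H'$: it is the smallest normal left coideal subalgebra $B$ with $H/(HB^+)$ commutative. Hence it suffices to check that $N$ is normal and that $H/(HN^+)$ is commutative. Normality I would obtain from the $D(H)$-module structure on $H$ (adjoint action of $H$, right hit of $H^*$): by Theorem \ref{central22} the element $z_n$ is central, hence $h\ad z_n=\gep(h)z_n$, so writing $D(H)=H^*H$ as a vector space, the $D(H)$-submodule generated by $z_n$ collapses to $H^*\cdot z_n=z_n\leftharpoonup H^*=H'_n$. Being a $D(H)$-submodule, $H'_n$ is in particular stable under the adjoint action; and since $H$ acts on itself by the adjoint action as a module algebra, the algebra $N$ generated by the adjoint-stable subspace $H'_n$ is again adjoint-stable, i.e. normal.

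The decisive step is the commutativity of $\bar H:=H/(HN^+)$, and here I would argue on eigenvalues rather than try to exhibit commutators inside $N$. By Corollary \ref{z2n} we have $z_n=\sum_i \frac{1}{d_i^{\,n-n({\rm mod}2)}}E_i$, so $z_n$ acts on the irreducible $V_i$ by the scalar $\frac{1}{d_i^{\,n-n({\rm mod}2)}}$, while $\gep(z_n)=1$ since $\gep(E_i)=\gd_{i0}$ and $d_0=1$. Thus $z_n-1\in N^+$ acts on $V_i$ by $\frac{1}{d_i^{\,n-n({\rm mod}2)}}-1$, which is nonzero precisely when $d_i>1$. Now an irreducible $V_i$ is an $\bar H$-module iff $N^+$ annihilates it, which forces $(z_n-1)V_i=0$ and hence $d_i=1$. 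Therefore every irreducible $\bar H$-module is one-dimensional, so the semisimple quotient $\bar H$ is commutative. Minimality then gives $H'\subseteq N$, and with the second paragraph, $N=H'$.

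The step I expect to be the real obstacle is exactly this commutativity of $H/(HN^+)$: one cannot hope to write an arbitrary commutator as a single hit of $z_n$ (that is the content of the open Question \ref{questioncon}), so the argument must be global rather than element-wise. The clean way around it is the observation that the \emph{one} central element $z_n-1\in N^+$ already acts by a nonzero scalar on every block with $d_i>1$, which suffices to eliminate all higher-dimensional irreducibles in the quotient. A secondary point to get right is the normality of $N$, where the $D(H)$-bookkeeping (the left/right conventions for the hit action and the factorization $D(H)=H^*H$) must be handled with care.
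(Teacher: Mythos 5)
Your proposal is correct and follows essentially the same route as the paper: show $N\subseteq H'$ from $z_n\in H'$, verify that $N$ is a normal left coideal subalgebra, prove $H/(HN^+)$ is commutative, and conclude by the minimality in the definition of $H'$. The only (harmless) divergences are that for normality the paper simply cites \cite[Prop.2.5]{cw4} where you rederive the ad-stability of $z_n\leftharpoonup H^*$ via the $D(H)$-module structure, and for commutativity of the quotient the paper applies Theorem \ref{z2com} to $\overline{H}$ (using $\overline{z_n}=\overline{1}$) whereas you inline that theorem's eigenvalue argument, observing directly that $z_n-1\in N^+$ acts by the nonzero scalar $d_i^{-(n-n(\mathrm{mod}2))}-1$ on every irreducible with $d_i>1$.
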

\begin{proof}
 Since $z_2\in H'$ by Proposition \ref{h'}, and since $z_n=z_2^{\frac{n-n({\rm mod}2)}{2}}$ by Theorem \ref{central22}, it follows that $z_n\in H'$ and so  $H'_n\subset H'.$ Let $N$ be the algebra generated by $H'_n.$ Since $H'_n$ is ad-stable by \cite[Prop.2.5]{cw4}, it follows that $N$ is a normal left coideal subalgebra of $H,$ and $N\subset H'.$

Let $\pi:H\longrightarrow \ol{H}$ be the natural Hopf projection where $\ol{H}=H/HN^+.$ Then since $\ol{\gL}=\pi(\gL)$ and $z_n=\sum \gL^1_1\cdots\gL^n_1S\gL^1_2\cdots S\gL_2^n\in N,$ it follows that  $\sum\ol{ \gL^1_1\cdots\gL^n_1S\gL^1_2\cdots S\gL_2^n}=
  \ol{1}.$ This implies by Theorem \ref{z2com} that $\ol{H}$ is commutative, hence $H'\subset N$ and we are done.
\end{proof}
\medskip
The following summarizes properties of $z_2$ which are a direct consequence of this section.
\begin{coro}\label{sumz2}
Let $H$ be a semisimple Hopf algebra and let $z_2=\sum\gL^1_1\gL^2_1S\gL_2^1S\gL^2_2.$ Then:

\medskip\noin{\rm 1}. $z_2=\sum_i\frac{1}{d_i^2}E_i$ is an invertible central element of $H$ and $Sz_2=z_2.$

\medskip\noin{\rm 2}. $z_2^k=z_2^l,\,k\ne l,\,k,l\in \Z$ if and only if $H$ is commutative.

\medskip\noin{\rm 3}. $\langle \chi_i,z_2\rangle =\frac{1}{d_i}$ for all $0\le i\le n-1.$

\medskip\noin{\rm 4}. $\{\gL,z_2^k\}\in Z(H)$ for all $k\in\Z.$
\end{coro}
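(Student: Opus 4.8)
The plan is to treat the four assertions in an order that lets each feed the next, starting from the spectral description of $z_2$. For part 1, I would simply record that the identity $z_2=\sum_i\frac{1}{d_i^2}E_i$ and the centrality $z_2\in Z(H)$ are exactly Theorem \ref{z2} (which itself rests on Theorem \ref{central22}). Invertibility is then immediate from the idempotent calculus: since the $E_i$ are orthogonal primitive central idempotents with $\sum_i E_i=1$, the element $\sum_i d_i^2 E_i$ is a two-sided inverse, so $z_2\minus=\sum_i d_i^2 E_i\in Z(H)$. For $Sz_2=z_2$ I would use that the antipode permutes the central primitive idempotents by $S(E_i)=E_{i^*}$, where $V_{i^*}\cong V_i^*$, together with $d_{i^*}=\dim V_i^*=\dim V_i=d_i$; reindexing the sum then returns $z_2$. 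Part 3 is a one-line consequence of part 1 and the pairing $\langle\chi_i,E_j\rangle=\gd_{ij}d_j$ from \eqref{dual1}, namely $\langle\chi_i,z_2\rangle=\sum_j\frac{1}{d_j^2}\langle\chi_i,E_j\rangle=\frac{1}{d_i}$.

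For part 2 I would exploit invertibility. If $z_2^k=z_2^l$ with $k\ne l$, then multiplying by $z_2^{-\min(k,l)}$ gives $z_2^m=1$ for the positive integer $m=|k-l|\ge 1$. By Theorem \ref{central22} we have $z_2^m=z_{2m}$, so $z_{2m}=1\in k$ with $2m\ge 2$, and Theorem \ref{z2com} forces $H$ to be commutative. Conversely, if $H$ is commutative then every $d_i=1$, so the formula of part 1 gives $z_2=\sum_i E_i=1$, whence $z_2^k=1=z_2^l$ for all integers $k,l$.

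Part 4 is where I expect the only genuine friction. First I would note that $z_2^k\in Z(H)$ for every $k\in\Z$: this is clear for $k\ge 0$, and for negative exponents it follows since $z_2\minus=\sum_i d_i^2 E_i$ is itself central by part 1, so all integer powers are central. Then I would like to invoke Lemma \ref{zlamda}. The one subtlety is that Lemma \ref{zlamda} is stated for $\{z,\gL\}$, whereas part 4 asks for the opposite order $\{\gL,z_2^k\}$. I would resolve this by checking that the computation of Lemma \ref{zlamda} goes through verbatim with the two arguments interchanged: its proof used only that $z$ is central together with the Casimir relations \eqref{lara}, both symmetric in the relevant sense, so that $\{\gL,w\}\in Z(H)$ for every central $w$; applying this to $w=z_2^k$ finishes. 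Alternatively one can appeal to the cocommutativity of $\gD\gL$ (valid because $S^2=\Id$ in the semisimple characteristic-zero setting) to identify $\{\gL,z_2^k\}$ with $\{z_2^k,\gL\}$ directly. Verifying that the lemma's manipulation really is order-symmetric is the main point to be careful about; everything else is bookkeeping over results already in hand.
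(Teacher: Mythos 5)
Parts 1--3 of your argument are correct, and they are exactly the ``direct consequences'' the paper has in mind (the paper states this corollary without proof, as a summary): the spectral form and centrality come from Theorem \ref{z2}, invertibility and $SE_i=E_{i^*}$ with $d_{i^*}=d_i$ give part 1, the pairing \eqref{dual1} gives part 3, and your reduction of part 2 to $z_2^m=1=z_{2m}$ plus Theorem \ref{z2com} is clean. The one genuine gap is in part 4, at precisely the spot you flagged. Your first proposed fix --- that the proof of Lemma \ref{zlamda} ``goes through verbatim with the two arguments interchanged'' because it only uses centrality and \eqref{lara}, ``both symmetric in the relevant sense'' --- is not accurate. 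That proof is essentially asymmetric: it uses centrality of the \emph{first} argument to commute $\Delta h$ past $\Delta z$ (via $\Delta(hz)=\Delta(zh)$), and it uses the integral identities \eqref{lara} for the \emph{second} argument $\gL$. Interchanging the arguments would require commuting $\Delta h$ past $\Delta\gL$, which fails since $\gL$ is not central. Your second suggestion --- cocommutativity of $\Delta\gL$ identifies $\{\gL,z_2^k\}$ with $\{z_2^k,\gL\}$ --- also does not work as stated: swapping the legs of $\Delta\gL$ turns $\sum\gL_1 z_1 S\gL_2 Sz_2$ into $\sum\gL_2 z_1 S\gL_1 Sz_2$, which is not $\{z_2^k,\gL\}$.

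The claim is nevertheless true and can be closed with ingredients already at hand. Since $S^2=\Id$, for any $a,b\in H$ one computes $S\{a,b\}=\sum b_2a_2Sb_1Sa_1$, so whenever \emph{both} $a$ and $b$ are cocommutative elements one gets $S\{a,b\}=\{b,a\}$. Here $\gL$ is cocommutative (it is, up to a scalar, the character of the regular representation of $H^*$, dual to $\gl=\chi_H$), and so is $z_2^k=\sum_i d_i^{-2k}E_i$, because each $E_i=\gL\leftharpoonup d_is(\chi_i)$ is cocommutative: once $\tau\Delta\gL=\Delta\gL$, the iterated coproduct of $\gL$ is invariant under all permutations of its legs, so $\gL\leftharpoonup p$ is cocommutative for every $p$. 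Hence $\{\gL,z_2^k\}=S\{z_2^k,\gL\}$; Lemma \ref{zlamda} applied to the central element $z_2^k$ (negative powers being central by your part 1) gives $\{z_2^k,\gL\}\in Z(H)$, and $S(Z(H))=Z(H)$ finishes. Note the paper itself blurs the order: the proof of Theorem \ref{z2lamda} cites Corollary \ref{sumz2}.4 for ``$\{z_2,\gL\}\in Z(H)$,'' so the intended content of part 4 is really Lemma \ref{zlamda} in the lemma's own order, and the displayed order in the corollary should be regarded as needing the extra step above.
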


\medskip
As for groups, we can define the iterated commutator $\{\{H,H\},H\}$ for Hopf algebras $H.$ In this case we can extend Proposition \ref{h'} as follows:
\begin{proposition}\label{iterated}
Let $H$ be a semisimple Hopf algebra. Then the iterated commutator satisfies:
$$\{\{H,H\},H\}=k\Leftrightarrow \{H,H\}\subset Z(H).$$
\end{proposition}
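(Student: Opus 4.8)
The plan is to mirror the group-theoretic equivalence $[[G,G],G]=1 \equi G'\subseteq Z(G)$, reducing everything to a statement about single commutators and the left adjoint action. Two elementary ingredients drive the argument. The first is the identity
$$wx=\sum (w_1\ad x)\,w_2\qquad(w,x\in H),$$
which follows at once from $\sum w_1 x S(w_2)w_3=\sum w_1 x\gep(w_2)=wx$. The second is a reduction lemma: for a fixed $w\in H$,
$$\{w,c\}\in k\ \text{for all }c\in H\quad\equi\quad w\ad x=\gep(w)\,x\ \text{for all }x\in H.$$
I would prove the forward (harder) direction by first applying $\gep$ to see that $\{w,c\}$, being scalar, must equal $\gep(w)\gep(c)1$, and then computing $\sum\{w,c_1\}c_2$ in two ways: on one side it is $\gep(w)c$, on the other it collapses via the antipode to $w\ad c$. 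Call $w$ \emph{adjoint-trivial} when $w\ad x=\gep(w)x$ for all $x$.

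The structural input I need is that ${\rm Com}$ is stable under $S$ and is therefore a two-sided coideal. Stability is the computation $S\{a,b\}=\{b,a\}\in{\rm Com}$. Combined with Lemma \ref{coideal} (so $\gD({\rm Com})\subseteq H\ot{\rm Com}$) and the rule $\gD(Sw)=\sum S(w_2)\ot S(w_1)$, stability forces $S(w_1)\in{\rm Com}$, whence $w_1\in S\minus({\rm Com})={\rm Com}$; since also $w_2\in{\rm Com}$, I obtain $\gD({\rm Com})\subseteq{\rm Com}\ot{\rm Com}$. (Here $S\minus=S$ because $H$ is semisimple.) Thus for any commutator $w$ \emph{both} legs of $\gD w$ lie in ${\rm Com}$.

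With these in hand the two implications are short. For $(\Leftarrow)$, assume ${\rm Com}\subseteq Z(H)$. For $w\in{\rm Com}$ the second leg satisfies $w_2\in{\rm Com}\subseteq Z(H)$, so $w\ad x=\sum w_1xS(w_2)=\sum w_1S(w_2)x=\gep(w)x$; hence $w$ is adjoint-trivial, and the reduction lemma gives $\{w,c\}\in k$ for every $c$. By bilinearity $\{\{H,H\},H\}\subseteq k$, and it is nonzero since $\{\{1,1\},1\}=1$, so $\{\{H,H\},H\}=k$. For $(\Rightarrow)$, assume $\{\{H,H\},H\}=k$. Then $\{v,c\}\in k$ for every $v\in{\rm Com}$ and $c\in H$, so by the reduction lemma every element of ${\rm Com}$ is adjoint-trivial. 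Now fix $w\in{\rm Com}$; by the structural fact $\gD w\in{\rm Com}\ot{\rm Com}$, so its first leg $w_1$ lies in ${\rm Com}$ and is adjoint-trivial. The identity then yields, for all $x$,
$$wx=\sum (w_1\ad x)\,w_2=\sum \gep(w_1)\,x\,w_2=x\sum\gep(w_1)w_2=xw,$$
so $w\in Z(H)$ and ${\rm Com}\subseteq Z(H)$.

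The main obstacle is precisely the point where the naive translation from groups breaks down: for a single central commutator one would like to conclude $w\ad x=\gep(w)x$ directly, but centrality is strictly weaker than adjoint-triviality in the Hopf setting (e.g.\ a class sum in $kG$ is central yet far from adjoint-trivial). The device that repairs this is the two-sided coideal property of ${\rm Com}$: it transports the hypothesis onto the \emph{first} leg $w_1$ of $\gD w$, which is exactly where the identity $wx=\sum(w_1\ad x)w_2$ can consume it. Establishing $w_1\in{\rm Com}$ — that is, upgrading the one-sided Lemma \ref{coideal} to a two-sided statement via $S$-stability — is thus the crux; everything else is formal.
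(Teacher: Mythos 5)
Your reduction lemma and the direction $(\Leftarrow)$ are correct: there you only need the \emph{second} leg of $\gD w$ to lie in ${\rm Com}$, which is exactly Lemma \ref{coideal}, and this part of your argument is essentially the paper's. The gap is in the direction $(\Rightarrow)$, and it sits precisely at the step you yourself single out as the crux: the claim that ${\rm Com}$ is $S$-stable because $S\{a,b\}=\{b,a\}$. Since $S$ is an antihomomorphism and $\gD\circ S=(S\ot S)\circ\gD^{op}$, the actual computation (using $S^2=\Id$, valid here by Larson--Radford) gives
$$S\{a,b\}=\sum S^2(b_2)S^2(a_2)S(b_1)S(a_1)=\sum b_2a_2S(b_1)S(a_1),$$
which is the commutator of $b$ and $a$ taken with respect to the \emph{opposite} coproduct. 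This equals $\{b,a\}=\sum b_1a_1S(b_2)S(a_2)$ for grouplikes or for cocommutative $H$, but for a general semisimple Hopf algebra there is no reason for it to lie in ${\rm Com}$. Consequently the upgrade of Lemma \ref{coideal} to $\gD({\rm Com})\subseteq{\rm Com}\ot{\rm Com}$ is unproved, and your final step $wx=\sum(w_1\ad x)w_2=xw$ has no justification: ${\rm Com}$ is only known to be a left coideal, so only $w_2$, not $w_1$, is known to be adjoint-trivial.

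The paper's proof of $(\Rightarrow)$ is arranged exactly so as to consume the second leg instead of the first. For $x\in\{H,H\}$ and $h\in H$ one expands
$$SxSh=\sum Sh_1Sx_1\bigl(x_2h_2Sx_3Sh_3\bigr);$$
the bracketed factor is the iterated commutator $\{x_{(2)},h_{(2)}\}$ with $x_{(2)}\in{\rm Com}$ by Lemma \ref{coideal}, hence by hypothesis it equals the scalar $\gep(x_{(2)})\gep(h_{(2)})$, and the whole sum collapses to $ShSx$. Thus $Sx$ is central and so is $x$. To salvage your route you would have to prove $S({\rm Com})\subseteq{\rm Com}$ (equivalently, that ${\rm Com}$ is also a right coideal) by some independent argument, or else replace your identity by one, like the display above, in which the hypothesis can be applied to $w_2$ rather than $w_1$.
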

\begin{proof}
Assume  $\{H,H\}\subset Z(H).$ Since $\{H,H\}$ is a left coideal and $Z(H)$ is $S$-stable, we have for all $a,b,c\in H,$
$$\sum\{a,b\}_1c_1S\{a,b\}_2Sc_2=\sum\{a,b\}_1S\{a,b\}_2c_1Sc_2=\langle\gep,abc\rangle.$$
Assume now $\{\{H,H\},H\}=k.$ Then for all $x\in \{H,H\},\,h\in H$ we have:
$$SxSh=\sum Sh_1Sx_1\underbrace{x_2h_2Sx_3Sh_3}_{\in \{\{H,H\},H\}}=ShSx.$$
Hence $x\in Z(H).$
\end{proof}

\section{Analogues of counting functions for Hopf algebras}
In what follows we indicate how counting functions can be realized from our point of view. Let $G$ be a finite group. Since $k$ is of characteristic $0$, $kG$ is a semisimple Hopf algebra with an idempotent integral $\gL=\frac{1}{|G|}\sum_{g\in G}g.$ The  projections $\{p_g\in kG^*\}$ defined by $\langle p_g,h\rangle=\gd_{g,h}$ form a basis for $kG^*$ dual to the natural basis  $\{g|g\in G\}$ of $kG.$ Since the counting function $N_w$ is a class function it follows that it is an element of $R(kG).$

Observe that
$$N_w=\sum_{(g_1\dots,g_r)\in G^r}p_{w(g_1\dots,g_r)}.$$

The following formula appear in the literature (see e.g.\cite[(2)]{av}.) We show it here using a Hopf algebraic approach.

\begin{eqnarray*}
\lefteqn{N_w=}\\
&=& \frac{1}{d_i}\sum_i \langle N_w,E_i\rangle \chi_i\qquad\qquad\text{(by \eqref{z})}\\
&=& \sum_i \langle N_w,\gL\leftharpoonup s(\chi_i)\rangle\chi_i\qquad\text{(by \eqref{dual1})}\\
&=& \sum_i \langle s(\chi_i),\gL\leftharpoonup N_w\rangle\chi_i\\
&=&\sum_i \left\langle s(\chi_i),\frac{1}{|G|}\sum_{g\in G}g\;\,\leftharpoonup \sum_{(g_1\dots,g_r)\in G^r} p_{w(g_1\dots,g_r)}\right\rangle\chi_i\\
&=&\sum_i \left\langle s(\chi_i),\frac{1}{|G|}\sum_{(g_1\dots,g_r)\in G^r} w(g_1\dots,g_r)\right\rangle\chi_i\\
\end{eqnarray*}
This formula yields known formulations of some counting functions:
\begin{example}\label{examples}
\medskip\noin{\rm 1}. {\bf Root functions}:
 if $w=x^m$ then $N_w$ is the so called $m$-th root function counting the number of solutions in $G$ to the equation $x^m=g,\,g\in G.$ In this case
$$N_w=\sum_{g\in G} p_{g^m}=\frac{1}{|G|}\sum_i\left\langle s(\chi_i),\sum_{g\in G}g^m\right\rangle\chi_i=\frac{1}{|G|}\sum_i\left\langle \chi_i,\sum_{g\in G}g^m\right\rangle\chi_i.$$
The coefficient of $\chi_i$ is called the $m$-th Frobenius-Schur indicator.

\medskip\noin{\rm 2}. {\bf Frobenius function for commutators}: If $w=[x,y]=xyx\minus y\minus$ then
$$N_w=\sum_i \left\langle\chi_i,\frac{1}{|G|}\sum_{(g,h)\in G^2} ghg\minus h\minus \right\rangle\chi_i.$$
But in terms of \eqref{zn}, $\sum_{(g,h)\in G^2} ghg\minus h\minus=|G|^2z_2,$ hence by Theorem \ref{z2} and Corollary \ref{sumz2}.3,
$$N_w=\sum_i \left\langle\chi_i,|G|z_2 \right\rangle\chi_i= \sum_i \frac{|G|}{d_i}\chi_i.$$

\medskip\noin{\rm 3}. {\bf Generalized commutators} (see \cite{ta}): Similarly to the previous example, the counting function corresponding to the generalized commutator  $w_n=[x_1,\dots,x_n]=x_1\cdots x_nx_1\minus\cdots x_n\minus$ is given by:
$$N_{w_n}=\sum_i \frac{|G|^{n-1}}{d_i^{n-1-n({\rm mod}2)}}\chi_i.$$
Also $N_{w_{n}}$ is the counting function corresponding to products of $n$ commutators.

\medskip\noin{\rm 4}. {\bf Iterated commutators} (see \cite{av}): The counting function for the iterated commutator $w=[[x,y],z]$ can be calculated as follows:
$$N_w=\sum_i \left\langle s(\chi_i),\frac{1}{|G|}\sum_{(g_1,g_2,g_3)\in G^3}[[g_1,g_2],g_3]\right\rangle\chi_i=
|G|^2\sum_{i,j} \frac{1}{d_id_j}\left\langle \chi_i s(\chi_i)\chi_j,\frac{1}{|G|}\sum_{g\in G}g\right\rangle\chi_i.$$
\end{example}

\medskip

In Hopf algebras we do not know if we are counting anything. However, these functions have a meaning as elements in $R(H)$ related to commutators, bilinear forms, Casimir elements and higher Frobenius Schur indicators.

\medskip\noin{\bf General root functions}:
A useful tool in studying the representations of Hopf algebras has been the introduction of Frobenius-Schur indicators, extending classical results of finite groups (see e.g \cite{ksz}). For a Hopf algebra $H$  define the $m$-th Sweedler power of $h\in H$ as $h^{[m]}=\sum h_1h_2\dots h_m,\,m\ge 1.$ For  an irreducible character $\chi,$ the $m$-th Frobenius-Schur indicator is given by $\nu_m(\chi)=\left\langle\chi,\gL^{[m]}\right\rangle.$

If $H=kG$ then $\gL^{[m]}=\frac{1}{|G|}\sum_{g\in G}g^m$ and so the function in Example \ref{examples}.1 becomes
$$r_m = \sum_i \nu_m(\chi_i)\chi_i=\Psi(\gL^{[m]}).$$

\medskip \noin{\bf Commutators and generalized commutators}. Set
\begin{equation}\label{frob}f_{rob}=\sum_i \frac{d}{d_i}\chi_i.\end{equation}

We show:
\begin{proposition}\label{frobn}
Let $H$ be a semisimple Hopf algebra,  let $z_2$ be the commutator as defined in \eqref{zn} and let $f_{rob}$ be defined as in \eqref{frob}. Then $$d\Psi(z_2)=f_{rob}.$$
\end{proposition}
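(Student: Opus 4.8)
The plan is to compute $\Psi(z_2)$ directly using the explicit form of $z_2$ established in Theorem~\ref{z2}, namely $z_2=\sum_i \frac{1}{d_i^2}E_i$. First I would recall the formula $\Psi(E_i)=d_i\,s(\chi_i)$ from \eqref{dual1}. Applying $\Psi$ to $z_2$ and using linearity, we obtain
\begin{equation*}
\Psi(z_2)=\sum_i \frac{1}{d_i^2}\Psi(E_i)=\sum_i \frac{1}{d_i^2}\,d_i\,s(\chi_i)=\sum_i \frac{1}{d_i}s(\chi_i).
\end{equation*}
Multiplying by $d$ then gives $d\Psi(z_2)=\sum_i \frac{d}{d_i}s(\chi_i)$.

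The one mismatch to reconcile is that the target $f_{rob}=\sum_i \frac{d}{d_i}\chi_i$ in \eqref{frob} involves $\chi_i$, whereas the naive computation yields $s(\chi_i)$. The key step is therefore to argue that $\sum_i \frac{d}{d_i}s(\chi_i)=\sum_i \frac{d}{d_i}\chi_i$. This holds because $s$ permutes the irreducible characters: for each $i$ there is an $i^*$ with $s(\chi_i)=\chi_{i^*}$, and the antipode preserves the degree, so $d_{i^*}=d_i$ and the coefficient $\frac{d}{d_i}$ is invariant under this permutation. Hence reindexing the sum by $i\mapsto i^*$ shows the two sums coincide.

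I expect the main obstacle to be the bookkeeping around the antipode: one must verify that $s$ indeed acts as a degree-preserving involution on the index set $\{0,\dots,n-1\}$, so that the reindexing is legitimate. This is standard — the dual of an irreducible module is irreducible with the same dimension — but it is the only nonformal ingredient. Alternatively, and perhaps more cleanly, I could avoid invoking $s(\chi_i)=\chi_{i^*}$ altogether by instead computing $\langle d\Psi(z_2), E_j\rangle$ for each $j$ and comparing with $\langle f_{rob},E_j\rangle$; using \eqref{dual1} one gets $\langle s(\chi_i),\gL\leftharpoonup\cdots\rangle$-type pairings that collapse via orthogonality \eqref{lacha} to $\frac{d}{d_j}$ on the nose, matching $\langle f_{rob},E_j\rangle=\frac{d}{d_j}\langle\chi_j,E_j\rangle/d_j = \frac{d}{d_j}$ by \eqref{dual1}. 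Either route is short; the first is the most direct given the closed form of $z_2$ already in hand.
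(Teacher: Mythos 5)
Your proposal is correct and follows essentially the same route as the paper: the paper's proof is a one-line appeal to $z_2=\sum_i\frac{1}{d_i^2}E_i$ (Theorem \ref{z2}) and $\Psi(E_i)=d_i\,s(\chi_i)$ (\eqref{dual1}). The only difference is that you make explicit the reindexing step $s(\chi_i)=\chi_{i^*}$ with $d_{i^*}=d_i$, which the paper leaves implicit; that step is indeed needed and your justification of it is sound.
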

\begin{proof}
Since $\Psi(E_i)=d_i s(\chi_i)$ by \eqref{dual1} and  $z_2=\sum_i\frac{1}{d_i^2}E_i$ by Theorem \ref{z2},  it follows that
$$d\Psi(z_2)=f_{rob}.$$
\end{proof}

For groups, Frobenius counting function for product of $n$ commutators can be extended to a formula counting $2n$-touples $(a_1,b_1,\dots,a_n,b_n)$  such that $g=a_1b_1a_1\minus b_1\minus\cdots a_nb_na_n\minus b_n\minus$ (\cite{km}). This number is given explicitly by $f_n(g)$ where:
\begin{equation}\label{fng}f_n=\sum (\frac{d}{d_i})^{2n-1}\chi_i.\end{equation}

The formula above is related also to the convolution product defined on functions on $G.$
Reminiscent of this product,  a convolution product $\bullet$ inside $H^*$ is defined as follows:
$$p\bullet p'=d\Psi(\Psi\minus(p)\Psi\minus(q)).$$

Variations of this product appear in \cite{an,cw1,pw}. This product should not be confused with the so called convolution product $*$ introduced by Kostant \cite{kos} which is the dual of the coproduct in $H$ and is considered to be the usual product in $H^*.$

It turns out that for Hopf algebras the convolution product of $f_{rob}$ is related to products of commutators as follows:
\begin{theorem}\label{bulletl}
Let $H$ be a semisimple Hopf algebra over an algebraically closed field $k$ of characteristic $0.$  Then for any $l>0,$
$$d^{2l-1}\Psi(z_{2l})=\sum_i\frac{d^{2l-1}}{d_i^{2l-1}}\chi_i=\underbrace{f_{rob}\bullet f_{rob}\cdots\bullet f_{rob}}_{l}.$$
\end{theorem}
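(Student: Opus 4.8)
The plan is to establish the two stated equalities separately: the right-hand one by induction on $l$ and the left-hand one by a direct spectral computation. The engine in both cases is that $\Psi$ carries the central elements $z_{2l}$ to explicit combinations of characters, combined with the identity $z_{2l}=z_2^{\,l}$ from Theorem \ref{central22}. In effect the whole statement is a bookkeeping consequence of Corollary \ref{z2n}, \eqref{dual1}, and Proposition \ref{frobn}, organized around the behaviour of $\Psi^{-1}$ on the product $\bullet$.

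For the left equality I would start from Corollary \ref{z2n}, which for even index gives $z_{2l}=\sum_i \frac{1}{d_i^{2l}}E_i$. Applying $\Psi$ and using $\Psi(E_i)=d_is(\chi_i)$ from \eqref{dual1} yields $\Psi(z_{2l})=\sum_i \frac{1}{d_i^{2l-1}}s(\chi_i)$. The only nonformal point is to replace $s(\chi_i)$ by $\chi_i$ under the summation: the antipode $s$ of $H^*$ permutes ${\rm Irr}(H)$ by sending $\chi_i$ to the character of the dual module, and this permutation preserves the degrees $d_i$; reindexing the sum along this degree-preserving bijection turns $\sum_i \frac{1}{d_i^{2l-1}}s(\chi_i)$ into $\sum_i \frac{1}{d_i^{2l-1}}\chi_i$. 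Multiplying by $d^{2l-1}$ then gives the left equality, of which the case $l=1$ is exactly Proposition \ref{frobn}.

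For the right equality, the key observation is that $\Psi^{-1}$ converts $\bullet$ into ordinary multiplication in $Z(H)$ up to a scalar: from $p\bullet q=d\Psi(\Psi^{-1}(p)\Psi^{-1}(q))$ one reads off $\Psi^{-1}(p\bullet q)=d\,\Psi^{-1}(p)\Psi^{-1}(q)$, which in particular shows that $\bullet$ is associative so the underbraced product is unambiguous. Starting from $\Psi^{-1}(f_{rob})=dz_2$ (immediate from Proposition \ref{frobn}), I would prove by induction on $l$ that
\[
\Psi^{-1}\bigl(\underbrace{f_{rob}\bullet\cdots\bullet f_{rob}}_{l}\bigr)=d^{2l-1}z_2^{\,l}.
\]
The inductive step multiplies the length-$l$ identity by one further factor of $f_{rob}$, picking up one factor of $d$ from the definition of $\bullet$ and a factor $dz_2$ from $\Psi^{-1}(f_{rob})$, giving $d\cdot d^{2l-1}z_2^{\,l}\cdot dz_2=d^{2l+1}z_2^{\,l+1}$, as required. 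Applying $\Psi$ and invoking $z_2^{\,l}=z_{2l}$ from Theorem \ref{central22} converts this into $\underbrace{f_{rob}\bullet\cdots\bullet f_{rob}}_{l}=d^{2l-1}\Psi(z_{2l})$, which together with the left equality closes the chain.

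I do not anticipate a serious obstacle; the argument rests entirely on results already in hand. The two points to handle with care are the passage from $s(\chi_i)$ to $\chi_i$ in the left equality, which relies on the degree-preserving symmetry of duality on ${\rm Irr}(H)$ rather than on $s(\chi_i)=\chi_i$ holding termwise, and the preliminary verification that $\bullet$ is genuinely associative, so that the iterated convolution is well defined before the induction is run.
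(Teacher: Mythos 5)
Your proof is correct and follows essentially the same route as the paper's: the base case is Proposition \ref{frobn}, the idempotent expansion of $z_2^{\,l}=z_{2l}$ gives $\Psi(z_{2l})$ explicitly, and the definition of $\bullet$ drives the induction on $l$. Your explicit reindexing of $s(\chi_i)$ back to $\chi_i$ via the degree-preserving duality permutation is a detail the paper's computation silently elides, but the two arguments are the same in substance.
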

\begin{proof}
For $l=1$ this is shown in Proposition \ref{frobn}. In other words,
$$\Psi\minus(f_{rob})=dz_2.$$
Since $z_2^2=\sum\frac{1}{d_i^4}E_i$ by Theorem \ref{z2}, it follows that $\Psi(z_2^2)=\sum\frac{1}{d_i^3}\chi_i.$ Hence
$$f_{rob}\bullet f_{rob}=d\Psi(\Psi\minus(f_{rob})\Psi\minus(f_{rob}))=d^3\Psi(z_2^2)=\sum_i \frac{d^3}{d_i^3}\chi_i.$$
This proves the case of $l=2.$ The result follows now by induction.
\end{proof}
\medskip

Another point of view of the setup above is to realize the special commutators $z_n$ as Casimir elements of certain symmetric bilinear on $Z(H)$ or $H.$ The symmetric bilinear form on $H$ defined in  \eqref{bilinear}
induces a non-degenerate symmetric bilinear form on $Z(H):$
$$\gb^Z(z,z')=\left\langle\gl,zz'\right\rangle$$

Let $\{E_i\}$ be the set of central idempotents of $H.$ Since $\left\langle\gl,E_iE_j\right\rangle=\gd_{ij}d_i^2,$ it follows that
the  Casimir element and the central Casimir element corresponding to $\gb^Z$ are given by:
\begin{equation}\label{cas0} \sum \frac{1}{d_i^2} E_i\ot E_i\quad\text{and}\quad Cas^Z_{\gb}=\sum_i\frac{1}{d_i^2}E_i=z_2.\end{equation}

By using \eqref{dual1} it is straightforward to see that:
\begin{lemma}\label{symmetric}
Let $H$ be a semisimple Hopf algebra  and let $t=\sum \ga_i\chi_i,\,\ga_i\in k.$ Then:

\medskip\noin{\rm 1}. $t\leftharpoonup Z(H)=R(H)$ if and only if $\ga_i\ne 0$ for all $i.$

\medskip\noin{\rm 2}. If indeed $\ga_i\ne 0$ for all $i,$ then the central Casimir element $Cas^Z_t$  of the corresponding bilinear form $\gb_t$
is:
$$Cas^Z_t =\sum \frac{1}{\ga_i d_i}E_i.$$ If moreover, $\ga_i=\ga_{i^*}$ (where  $\ga_{i^*}$ is the coefficient of $s(\chi_i)$), then:
$$\Psi(Cas^Z_t)=\sum_i \frac{1}{\ga_i}\chi_i.$$
\end{lemma}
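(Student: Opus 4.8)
The statement asks to verify two things about a symmetric bilinear form $\gb_t$ on $Z(H)$ attached to $t=\sum_i\ga_i\chi_i$: first, a criterion for when $t$ generates $R(H)$ as a $Z(H)$-module (equivalently, when $\gb_t$ is nondegenerate), and second, an explicit formula for its central Casimir element, together with the effect of applying $\Psi$.

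Let me set up the proof plan.

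\medskip\noindent\textbf{Proof proposal.}

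The plan is to work entirely in the dual bases $\{\chi_i\}$ of $R(H)$ and $\{\frac{1}{d_j}E_j\}$ of $Z(H)$, whose pairing is governed by \eqref{dual1}, namely $\langle\chi_i,E_j\rangle=\gd_{ij}d_j$. First I would prove part 1. The form $\gb_t$ on $Z(H)$ is by definition $\gb_t(z,z')=\langle t, zz'\rangle$, and the hit action of $Z(H)$ on $t$ is $\langle t\leftharpoonup z, z'\rangle = \langle t, zz'\rangle$, so the statement ``$t\leftharpoonup Z(H)=R(H)$'' is exactly the statement that $\gb_t$ is nondegenerate. I would compute $t\leftharpoonup E_j$ directly: since $t=\sum_i\ga_i\chi_i$ and $\chi_i\leftharpoonup E_j=\gd_{ij}\chi_i$ by \eqref{dual1}, we get $t\leftharpoonup E_j=\ga_j\chi_j$. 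Hence the image of $t\leftharpoonup Z(H)$ is $\mathrm{span}_k\{\ga_j\chi_j\}$, which equals all of $R(H)$ precisely when every $\ga_j\ne 0$. This gives part 1 in one line once the action is unwound.

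For part 2, assuming all $\ga_i\ne 0$, the computation $t\leftharpoonup E_j=\ga_j\chi_j$ from part 1 already identifies the dual basis for $\gb_t$: the pair $\{r_i\},\{l_i\}$ satisfying $\gb_t(r_i,l_j)=\gd_{ij}$ can be taken to be $r_i=\frac{1}{\ga_i d_i}E_i$ and $l_i=\frac{1}{d_i}E_i$ (up to the symmetric rescaling), since $\gb_t(E_i,E_j)=\langle t,E_iE_j\rangle=\gd_{ij}\langle t,E_i\rangle=\gd_{ij}\ga_i d_i$ using the orthogonality of the idempotents and $\langle\chi_k,E_i\rangle=\gd_{ki}d_i$. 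The central Casimir element is then $Cas^Z_t=\sum_i r_il_i$, and because the $E_i$ are orthogonal idempotents ($E_i^2=E_i$), this collapses to $\sum_i\frac{1}{\ga_i d_i}E_i$, which is the claimed formula. For the final assertion, I would apply $\Psi$ term by term using $\Psi(E_i)=d_i s(\chi_i)$ from \eqref{dual1}, yielding $\Psi(Cas^Z_t)=\sum_i\frac{1}{\ga_i}s(\chi_i)$; the extra hypothesis $\ga_i=\ga_{i^*}$ lets me reindex the sum over the involution $i\mapsto i^*$ so that $\sum_i\frac{1}{\ga_i}s(\chi_i)=\sum_i\frac{1}{\ga_{i^*}}\chi_{i^*}=\sum_i\frac{1}{\ga_i}\chi_i$, giving the stated identity.

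The only genuinely delicate point is the bookkeeping around the dual basis for a \emph{symmetric} form: one must make sure $\{r_i\}$ and $\{l_i\}$ are chosen consistently with the convention $Cas^Z_t=\sum_i r_i l_i=\sum_i l_i r_i$ and that the resulting element does not depend on this choice, which is guaranteed by the general theory recalled in Section 1 around \eqref{connection}. Everything else is a direct substitution of \eqref{dual1} and the relation $\langle\gl,E_iE_j\rangle=\gd_{ij}d_i^2$ already noted before \eqref{cas0}; I expect no real obstacle, only careful index tracking through the involution $i\mapsto i^*$ in the very last step.
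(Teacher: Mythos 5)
Your overall strategy is exactly the one the paper intends (the paper gives no proof beyond the remark that the lemma follows from \eqref{dual1}): part 1 via $t\leftharpoonup E_j=\ga_j\chi_j$, part 2 via an explicit dual basis of orthogonal idempotents, and the final reindexing over $i\mapsto i^*$ are all the right moves. There is, however, a concrete normalization error in part 2. Since $\gb_t(E_i,E_j)=\gd_{ij}\,\ga_i d_i$, your choice $r_i=\frac{1}{\ga_i d_i}E_i,\ l_i=\frac{1}{d_i}E_i$ gives $\gb_t(r_i,l_j)=\gd_{ij}/d_j$, not $\gd_{ij}$, so it is not a dual basis; and carried through literally it yields $\sum_i r_il_i=\sum_i\frac{1}{\ga_i d_i^2}E_i$, which contradicts the Casimir element you then assert. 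The product of the two scalar coefficients must equal $\frac{1}{\ga_i d_i}$, so take for instance $r_i=\frac{1}{\ga_i d_i}E_i$ and $l_i=E_i$; then $E_i^2=E_i$ gives $Cas^Z_t=\sum_i\frac{1}{\ga_i d_i}E_i$ as claimed, and the remainder of your argument --- applying $\Psi(E_i)=d_i s(\chi_i)$ and reindexing using $\ga_i=\ga_{i^*}$ --- goes through verbatim. This is a fixable slip rather than a conceptual gap, but as written the displayed dual basis and the displayed conclusion are inconsistent with each other.
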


Summarizing results from Lemma \ref{symmetric}, Theorem \ref{bulletl} and Corollary \ref{z2n} we obtain the following realization of the counting functions in Example \ref{examples}.3:
\begin{coro}\label{casn1}
Let $H$ be a semisimple Hopf algebra and let $z_n,\,n\ge 2$ be defined as in \eqref{zn}.  Define the symmteric form $\gb_n^Z$ on $Z(H)$  through:
 $$t_n=\sum_id_i^{n-1-n({\rm mod}2)}\chi_i.$$
Then the corresponding central Casimir element satisfies:
$$Cas^Z_{\gb_n}=z_n.$$
Moreover,
$$d^{n-1}\Psi(Cas^Z_{\gb_n})=d^{n({\rm mod}2)}\underbrace{f_{rob}\bullet\cdots\bullet f_{rob}}_{\frac{n-n({\rm mod}2)}{2}}=\sum_i\frac{d^{n-1}}{d_i^{n-n({\rm mod}2)}}\chi_i.$$

Note that $t_2=\gl$ and $Cas^Z_{\gb_2}=z_2.$
\end{coro}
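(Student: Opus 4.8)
The plan is to obtain both assertions by specializing Lemma \ref{symmetric} to the functional $t_n$ and then translating the outcome into the convolution language via Theorem \ref{bulletl}. Writing $t_n=\sum_i\ga_i\chi_i$, the coefficients are $\ga_i=d_i^{n-1-n({\rm mod}2)}$, which are all strictly positive and in particular nonzero. Hence Lemma \ref{symmetric}.1 certifies that $t_n$ generates $R(H)$ as a $Z(H)$-module, so $\gb_n^Z$ really is a nondegenerate symmetric form and the formulas of Lemma \ref{symmetric}.2 apply. Substituting $\ga_i=d_i^{n-1-n({\rm mod}2)}$ into $Cas^Z_{t}=\sum_i\frac{1}{\ga_i d_i}E_i$ gives $Cas^Z_{\gb_n}=\sum_i d_i^{-(n-n({\rm mod}2))}E_i$, which is exactly $z_n$ by Corollary \ref{z2n}. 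This settles the first identity; the parenthetical remark $t_2=\gl$ is just the case $n=2$, where the exponent collapses to $n-1-0=1$ so that $t_2=\sum_i d_i\chi_i=\chi_H=\gl$, and correspondingly $Cas^Z_{\gb_2}=z_2$.

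For the ``moreover'' clause I would next check the symmetry hypothesis $\ga_i=\ga_{i^*}$ needed for the second formula in Lemma \ref{symmetric}.2. Since a module and its dual have the same dimension, $d_{i^*}=d_i$, and the exponent depends on $i$ only through $d_i$; thus $\ga_i=\ga_{i^*}$ holds automatically. Lemma \ref{symmetric}.2 then yields $\Psi(Cas^Z_{\gb_n})=\sum_i\ga_i^{-1}\chi_i=\sum_i d_i^{-(n-1-n({\rm mod}2))}\chi_i$, and multiplying by $d^{n-1}$ produces the claimed character $\sum_i d^{n-1}d_i^{-(n-1-n({\rm mod}2))}\chi_i$; a useful consistency check here is that for even $n$ this must agree with the expression recorded in Theorem \ref{bulletl}.

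It then remains to match this character with the iterated convolution. I would set $l=\frac{n-n({\rm mod}2)}{2}$, the half of the even part of $n$, so that $z_n=z_{2l}$ by Theorem \ref{central22}. Theorem \ref{bulletl} already records $\underbrace{f_{rob}\bullet\cdots\bullet f_{rob}}_{l}=d^{2l-1}\Psi(z_{2l})$, so only the prefactor in $d$ needs reconciling. For even $n$ one has $n-1=2l-1$, and the factor is $d^{n({\rm mod}2)}=d^0=1$; for odd $n$ one has $n-1=2l$, so $d^{n-1}\Psi(z_n)=d\cdot d^{2l-1}\Psi(z_{2l})$ picks up precisely one extra factor of $d$, matching $d^{n({\rm mod}2)}=d$. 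I expect this parity bookkeeping---tracking how the collapse $z_{2k+1}=z_{2k}$ from Theorem \ref{central22} forces the correction factor $d^{n({\rm mod}2)}$---to be the only point requiring real care; everything else is direct substitution into Lemma \ref{symmetric} and Corollary \ref{z2n}.
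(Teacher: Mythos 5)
Your proof is correct and is precisely the argument the paper intends: the corollary is presented as a summary of Lemma \ref{symmetric}, Corollary \ref{z2n} and Theorem \ref{central22}/\ref{bulletl}, and you substitute into exactly those results, including the needed checks that $\ga_i\ne 0$ and $\ga_i=\ga_{i^*}$. Note only that the exponent you derive, $d_i^{\,n-1-n({\rm mod}2)}$, is the right one (it agrees with Theorem \ref{bulletl} for $n=2l$ and with Example \ref{examples}.3), so the rightmost expression in the corollary's final display, which reads $d_i^{\,n-n({\rm mod}2)}$, is an off-by-one typo in the paper rather than a defect in your argument.
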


The elements $z_n$ are related also to symmetric forms on $H$ as follows:

\begin{proposition}\label{second}
Let $H$ be a semisimple Hopf algebra and let $z_n,\,n\ge 2$ be defined as in \eqref{zn}. Then $z_n$  is the central Casimir element of the symmetric form on $H$ $\tl{\gb}_n$ defind through:
$$\tl{t}_n=\sum_i d_i^{n+1-n({\rm mod}2)}\chi_i,\qquad \tilde{\gb}_n^H(h,h')=\langle \tl{t}_n ,hh'\rangle$$ for all $h,h'\in H.$

The Higman map defined by this form (as in \eqref{tau}) is
$$\tau_{\gb_n}(h)=Z_{n+1-n({\rm mod}2)}(h).$$
\end{proposition}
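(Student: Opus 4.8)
The plan is to recognize $\tilde{\gb}_n$ as a central twist of the standard symmetric form $\gb$ of \eqref{bilinear} and then extract both assertions from the connection formula \eqref{connection}. Recall that $\gb$ corresponds to the generator $\gl=\sum_i d_i\chi_i$ and, by \eqref{casimir}, has Casimir tensor $\sum\gL_1\ot S\gL_2$; so in the notation of \eqref{connection} I take $\sum r_i\ot l_i=\sum\gL_1\ot S\gL_2$ and $t=\gl$. The first step is to exhibit $\tilde{t}_n$ as $\gl\leftharpoonup u$ for an invertible central $u$. By Corollary \ref{sumz2}.1 the element $z_n$ is central and invertible, and from Corollary \ref{z2n} together with the orthogonality of the $E_i$ one gets $z_n\minus=\sum_i d_i^{n-n({\rm mod}2)}E_i$. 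Using \eqref{dual1} and \eqref{act} this yields $\chi_i\leftharpoonup z_n\minus=d_i^{n-n({\rm mod}2)}\chi_i$, whence $\gl\leftharpoonup z_n\minus=\sum_i d_i^{n+1-n({\rm mod}2)}\chi_i=\tilde{t}_n$. Thus $u=z_n\minus$ and $u\minus=z_n$ in \eqref{connection}; non-degeneracy of $\tilde{\gb}_n$ is automatic since $\tilde{t}_n$ is the image of the generator $\gl$ under the invertible central $z_n\minus$ (equivalently, all its coefficients are nonzero, by Lemma \ref{symmetric}.1).

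With $u\minus=z_n$ identified, \eqref{connection} gives immediately $Cas_{\tilde{\gb}_n}=\sum r_iu\minus l_i=\sum\gL_1 z_n S\gL_2$. Since $z_n$ is central I may pull it to the front, and because $\sum\gL_1 S\gL_2=\gep(\gL)1=1$ for the idempotent integral, this collapses to $Cas_{\tilde{\gb}_n}=z_n$, which is the first claim.

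For the Higman map I use the other half of \eqref{connection}, namely $r_i'=r_iu\minus=\gL_1 z_n$ and $l_i'=l_i=S\gL_2$. Then by \eqref{tau} one has $\tau_{\tilde{\gb}_n}(h)=\sum r_i' h l_i'=\sum\gL_1 z_n h S\gL_2=z_n\sum\gL_1 h S\gL_2=z_n Z_1(h)$, using centrality of $z_n$ and \eqref{znh}. It remains to match this with $Z_{n+1-n({\rm mod}2)}(h)$. Writing $M=n+1-n({\rm mod}2)$, a parity check shows $M$ is odd for every $n$, so iterating Proposition \ref{central2}.1 gives $Z_M(h)=z_2^{(M-1)/2}Z_1(h)$; and since $(M-1)/2=(n-n({\rm mod}2))/2$, Theorem \ref{central22} identifies $z_2^{(M-1)/2}=z_n$. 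Hence $Z_M(h)=z_n Z_1(h)=\tau_{\tilde{\gb}_n}(h)$, as required.

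The only delicate point is the index bookkeeping: one must verify that $n+1-n({\rm mod}2)$ is always odd (so that the reduction of Proposition \ref{central2} terminates at $Z_1$ rather than $Z_0$) and that the resulting power of $z_2$ matches $z_n$ via Theorem \ref{central22}. Everything else is a direct substitution into \eqref{connection}, the centrality of $z_n$ from Theorem \ref{central22}, and the normalization $\sum\gL_1 S\gL_2=1$.
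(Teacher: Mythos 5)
Your proof is correct and follows essentially the same route as the paper: identify $\tilde{t}_n=\gl\leftharpoonup z_n\minus$ with $u=z_n\minus$ central and invertible, then read off $Cas_{\tilde{\gb}_n}=\sum\gL_1 z_nS\gL_2=z_n$ and $\tau_{\tilde{\gb}_n}(h)=z_n(\gL\ad h)=Z_{n+1-n({\rm mod}2)}(h)$ from \eqref{connection}, \eqref{casimir}, centrality of $z_n$, and Theorem \ref{central22}. Your version merely spells out the computation $\gl\leftharpoonup z_n\minus=\tilde t_n$ and the parity bookkeeping more explicitly than the paper does.
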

\begin{proof}
Take $u_n=\sum_i d_i^{n-n({\rm mod}2)}E_i=z_n\minus.$ Then $\tl{t}_n=\gl\leftharpoonup u_n.$ Since $u_n\minus=\sum\gL_1^1\cdots\gL_1^nS\gL_1^2\cdots S\gL^n_2,$ it follows from \eqref{connection} and \eqref{casimir} that the corresponding central Casimir element is $\gL\ad z_n=z_n$  since $z_n$ is central.

The Higman map  is given now by:
$$\tau_{\tl{\gb}_n}(h)=\sum\gL_1z_nhS\gL_2=z_n(\gL\ad h),$$
where the last equality follows since $z_n\in Z(H).$ By Theorem \ref{central22}
$$z_n(\gL\ad h)=z_2^{\frac{n-n({\rm mod}2)}{2}}(\gL\cdot h)=Z_{n+1-n({\rm mod}2)}(h).$$
\end{proof}

\medskip
\noin{\bf Iterated commutators}. Motivated by the counting function for iterated commutators for groups, (see \cite{av}), set:
\begin{equation}\label{nxyz}f_{\{\{x,y\},z\}}=d^2\sum_{i,j} \frac{1}{d_id_j}\langle \chi_i s(\chi_i)\chi_j,\gL\rangle\chi_i.\end{equation}
Then we have:
\begin{theorem}\label{z2lamda}
Let $H$ be a semisimple hopf algebra over an algebraically closed fiels of characteristic $0.$  Then:
$$f_{\{\{x,y\},z\}}=d^2\sum_{i,j} \frac{1}{d_i}\langle \chi_i s(\chi_i),z_2\rangle\chi_i=d^2\Psi(\{z_2,\gL\}).$$
\end{theorem}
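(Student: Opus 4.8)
The plan is to treat the two equalities separately; the second is the substantive one. For the first equality, since both sides lie in $R(H)$ it suffices to match the coefficient of each $\chi_i$, so everything reduces to the identity
$$\sum_l \frac{1}{d_l}\langle p\chi_l,\gL\rangle=\langle p,z_2\rangle\qquad(p\in R(H)).$$
To see this, write $p=\sum_m c_m\chi_m$ with $c_m=\frac{1}{d_m}\langle p,E_m\rangle$ by \eqref{z}; using $s^2=\Id$ and the orthogonality \eqref{lacha} one has $\langle\chi_m\chi_l,\gL\rangle=\langle\chi_m s(\chi_{l^*}),\gL\rangle=\gd_{m,l^*}$, whence $\langle p\chi_l,\gL\rangle=c_{l^*}$ and $\sum_l\frac{1}{d_l}\langle p\chi_l,\gL\rangle=\sum_l\frac{c_{l^*}}{d_l}=\sum_l\frac{c_l}{d_l}=\langle p,z_2\rangle$, the last step by Corollary \ref{sumz2}.3. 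Taking $p=\chi_i s(\chi_i)$ gives the first equality.

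The heart of the theorem is the formula $\Psi(\{z_2,\gL\})=\sum_i\frac{1}{d_i}\langle s(\chi_i)\chi_i,z_2\rangle\chi_i$, which I would obtain from the following lemma valid for every $z\in Z(H)$:
$$\langle\chi_m,\{z,\gL\}\rangle=\frac{1}{d_m}\langle\chi_m s(\chi_m),z\rangle.$$
To prove it, evaluate the character in the representation $\rho=\rho_m$ on $V_m$, so that $\langle\chi_m,\{z,\gL\}\rangle=\Trace\big(\sum\rho(z_1)\rho(\gL_1)\rho(Sz_2)\rho(S\gL_2)\big)$. The key observation is that the Casimir element \eqref{casimir} produces the averaging operator $F\colon\End(V_m)\to\End(V_m)$, $F(Y)=\sum\rho(\gL_1)Y\rho(S\gL_2)$: since $\gL$ is the normalized idempotent integral, $F$ is the projection of $\End(V_m)$ onto its invariants for the adjoint action, which by Schur's lemma is $k\,\Id$; as the trivial and nontrivial isotypic components are orthogonal under the invariant form $\langle Y,Y'\rangle=\Trace(YY')$ and $F(\Id)=\rho(\sum\gL_1 S\gL_2)=\Id$, one gets $F(Y)=\frac{\Trace(Y)}{d_m}\Id$. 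Now $\rho(z_1)$ sits outside the $\gL$-sandwich while $\rho(Sz_2)$ sits inside it, so $\sum\rho(z_1)\rho(\gL_1)\rho(Sz_2)\rho(S\gL_2)=\frac{1}{d_m}\sum\Trace(\rho(Sz_2))\rho(z_1)$; taking traces and recognising $\sum\langle\chi_m,z_1\rangle\langle s(\chi_m),z_2\rangle=\langle\chi_m s(\chi_m),z\rangle$ proves the lemma. Feeding this into $\{z_2,\gL\}=\sum_i\frac{1}{d_i}\langle\chi_i,\{z_2,\gL\}\rangle E_i=\sum_i\frac{1}{d_i^2}\langle\chi_i s(\chi_i),z_2\rangle E_i$ (by \eqref{z}) and applying $\Psi$ with $\Psi(E_i)=d_i s(\chi_i)$ from \eqref{dual1} yields $\Psi(\{z_2,\gL\})=\sum_i\frac{1}{d_i}\langle\chi_i s(\chi_i),z_2\rangle s(\chi_i)$; re-indexing $i\mapsto i^*$ (using $s(\chi_{i^*})=\chi_i$ and $d_{i^*}=d_i$) rewrites the right-hand side as $\sum_i\frac{1}{d_i}\langle s(\chi_i)\chi_i,z_2\rangle\chi_i$.

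Comparing with the middle expression, the proof is complete once one knows the symmetry
$$\langle\chi_i s(\chi_i),z_2\rangle=\langle s(\chi_i)\chi_i,z_2\rangle\qquad\text{for all }i,$$
and this is the step I expect to be the main obstacle, since $R(H)$ need not be commutative and $\langle\,\cdot\,,z_2\rangle$ is not a trace on $R(H)$ in general. By the lemma above the symmetry is equivalent to $\langle\chi_i,\{z_2,\gL\}\rangle=\langle s(\chi_i),\{z_2,\gL\}\rangle$ for all $i$, i.e. to the central element $\{z_2,\gL\}$ being fixed by $S$; it also amounts to the modules $V_i\otimes V_i^{*}$ and $V_i^{*}\otimes V_i$ having equal $z_2$-weighted dimension. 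I would attack it using the two special features of $z_2$ available here, namely that it is central and, by Corollary \ref{sumz2}.1, $S$-fixed ($Sz_2=z_2$), together with the fact that each $\chi_i$ is cocommutative (trace-like) in $H^*$: the aim would be to show that $\chi_i\otimes s(\chi_i)$ and $s(\chi_i)\otimes\chi_i$ agree on $\gD z_2$, ideally by deriving an $S$-invariance of $\{z_2,\gL\}$ directly from the double-integral form $z_2=\sum\gL^1_1\gL^2_1 S\gL^1_2 S\gL^2_2$ and the integral identities \eqref{lara}.
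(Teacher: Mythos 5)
Your first equality and your key lemma are both correct. The lemma $\langle\chi_m,\{z,\gL\}\rangle=\frac{1}{d_m}\langle\chi_m s(\chi_m),z\rangle$ is exactly the identity the paper establishes, but you reach it by a genuinely different route: the paper manipulates the pairing using \eqref{act} and the fact that $\gL\ad(Sz_2)\in Z(H)$, whereas you evaluate in the irreducible representation and use that $Y\mapsto\sum\rho(\gL_1)Y\rho(S\gL_2)$ is the trace-normalized projection onto $\End_H(V_m)=k\Id$. Your version is arguably more transparent (and, as you note, needs no centrality of $z$); the paper's stays inside the formalism of Section 1. Up to this point the two arguments are interchangeable.

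However, your proof as submitted is incomplete, and the gap is exactly where you flag it: you correctly obtain $\Psi(\{z_2,\gL\})=\sum_i\frac{1}{d_i}\langle s(\chi_i)\chi_i,z_2\rangle\chi_i$ from $\Psi(E_i)=d_is(\chi_i)$, and the theorem requires the reversed product $\langle\chi_i s(\chi_i),z_2\rangle$ in the coefficient; the symmetry $\langle\chi_i s(\chi_i),z_2\rangle=\langle s(\chi_i)\chi_i,z_2\rangle$ (equivalently $S\{z_2,\gL\}=\{z_2,\gL\}$) is asserted as a target but never proved, and it is not a formality: since $\langle pq,\gL\rangle=\langle qp,\gL\rangle$ on $R(H)$, the needed identity amounts to $\langle[\chi_i,s(\chi_i)]\,u,\gL\rangle=0$ for $u=\sum_j\frac{1}{d_j}\chi_j$, i.e.\ to a weighted agreement of the fusion multiplicities of $V_i\ot V_i^{*}$ and $V_i^{*}\ot V_i$, which does not follow from general principles when $R(H)$ is noncommutative (it is immediate in the almost cocommutative setting of Section 4). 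You should be aware that the paper's own proof silently crosses the same bridge: it computes $\Psi(\{z_2,\gL\})$ as $\sum_i d_i\chi_i\leftharpoonup\{z_2,\gL\}=\gl\leftharpoonup\{z_2,\gL\}$, dropping the antipode in \eqref{psi}, which is legitimate only if $\{z_2,\gL\}$ is $S$-fixed --- precisely the unproved symmetry. So you have correctly isolated the one step that genuinely needs an argument; but since your writeup does not supply that argument, it does not yet constitute a proof of the theorem.
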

\begin{proof}
We have:
\begin{eqnarray*}
\lefteqn{\langle \chi_i,\{z_2,\gL\}\rangle=}\\
&=&\sum_{z_2,\gL} \langle \chi_i,z_{2_1}\gL_1Sz_{2_2}S\gL_2\rangle\\
&=&\sum_{z_2,\gL} \langle \frac{1}{d_i}\chi_i,Sz_{2_2}\rangle\langle\chi_i,z_{2_1}\rangle\qquad\text{(by \eqref{act} since $\gL\cdot Sz_{2_2}\in Z(H)$)}\\
&=&\frac{1}{d_i}\langle \chi_is(\chi_i),z_2\rangle.
\end{eqnarray*}
Since $\{z_2,\gL\}\in Z(H)$ by Corollary \ref{sumz2}.4, we have by \eqref{act},
$$\Psi(\{z_2,\gL\})=\sum_i d_i\chi_i\leftharpoonup \{z_2,\gL\}=\sum_i \langle\chi_i,\{z_2,\gL\}\rangle\chi_i=\sum_i\frac{1}{d_i}\langle \chi_is(\chi_i),z_2\rangle\chi_i.$$
This proves the second equality in the theorem.
As for the first equality,  since
$$z_2=\sum_j \frac{1}{d_j^2}E_j=\gL\leftharpoonup\sum_j\frac{1}{d_j}\chi_j$$ it follows that:
$$\Psi(\{z_2,\gL\})= d^2\sum_{i,j} \frac{1}{d_i}\left\langle \chi_i s(\chi_i),\gL\leftharpoonup\frac{1}{d_j}\chi_j\right\rangle\chi_i
=d^2\sum_{i,j} \frac{1}{d_id_j}\langle \chi_i s(\chi_i)\chi_j,\gL\rangle\chi_i.$$
By \eqref{nxyz} we are done.
\end{proof}

\section{Commutators for almost cocommutative Hopf algebras}

It is known (\cite{k,z}) that when $H$ is semisimple then so is $R(H).$  Let $\{\frac{1}{d}\gl=F_0,\dots F_{m-1}\}$ be a complete set of central primitive idempotents of $R(H),$ and let $\{f_0,\dots,f_{m-1}\}$ be primitive orthogonal idempotents in $R(H)$ so that $f_iF_j=\gd_{ij}f_i.$ Define the {\bf conjugacy class} ${\mathfrak C}_i$ as:
\begin{equation}\label{cci}{\mathfrak C}_i=\gL\leftharpoonup f_iH^*.\end{equation}
Then we have shown in \cite{cw4} that:

  {\it ${\mathfrak C}_i$ is an irreducible $D(H)$-module and moreover,
$$H\cong \oplus_{i=0}^{m-1} {\mathfrak C}_i^{\oplus m_i}$$
as $D(H)$-modules}.

We generalize also the notions of  {\bf Class sum}  and of a representative of a conjugacy class as follows:
\begin{equation}\label{ci} C_i=\gL\leftharpoonup dF_i\qquad \eta_i=\frac{C_i}{\dim(f_iH^*)}.\end{equation}
We refer to $\eta_i$ as a  {\bf normalized class sum}.

If $R(H)$ is commutative (which is equivalent to $H$ being almost cocommutative \cite{ni}), then $\{F_i\}$ forms a basis of $R(H)$ and $f_i=F_i$ for all $i.$ In this case
$\{F_i\},\{\eta_i\}$ is  another pair of dual bases for $R(H)$ and $Z(H)$ respectively.

\medskip
In this context $z_2$ has another realization.
\begin{proposition}\label{cas5}
Let $H$ be a semisimple Hopf algebra and assume $R(H)$ is commutative. Then
$$z_2=\frac{1}{d}\sum_i \dim (F_iH^*)\eta_iS\eta_i$$
\end{proposition}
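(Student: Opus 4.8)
The plan is to express both sides in terms of the central primitive idempotents $E_i$ of $H$ and check they agree coefficient by coefficient. By Theorem \ref{z2} we already know $z_2=\sum_i \frac{1}{d_i^2}E_i$, so the entire content of the proposition is to show that the element $\frac{1}{d}\sum_i \dim(F_iH^*)\,\eta_i S\eta_i$ equals $\sum_i \frac{1}{d_i^2}E_i$. Since $R(H)$ is assumed commutative, the pairs $\{F_i\},\{\eta_i\}$ are dual bases for $R(H)$ and $Z(H)$, so I expect that computing $\eta_i S\eta_i$ in the $E$-basis, summing against the weights $\dim(F_iH^*)$, and invoking character orthogonality will collapse the double sum to exactly the diagonal $\frac{1}{d_i^2}$ coefficients.

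First I would expand each normalized class sum in the central idempotent basis. Using \eqref{z}, each $\eta_i\in Z(H)$ can be written $\eta_i=\sum_j \frac{1}{d_j}\langle \chi_j,\eta_i\rangle E_j$, and likewise I would need the expansion of $S\eta_i$. Because $S$ permutes the $E_j$ (sending $E_j$ to $E_{j^*}$) and $R(H)$ is $s$-stable, $S\eta_i$ has a comparably explicit expansion. The key algebraic fact to exploit is that the $F_i$ are orthogonal idempotents summing to $1$ in $R(H)$, and that under the Frobenius map $\Psi$ (with $\Psi(Z(H))=R(H)$) the normalized class sums $\eta_i$ correspond to the idempotents $F_i$ up to the normalizing scalars $\dim(F_iH^*)$. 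This dictionary between $\{\eta_i\}$ and $\{F_i\}$ is what converts the products $\eta_i S\eta_i$ into something whose sum telescopes.

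The main computation is then to evaluate $\sum_i \dim(F_iH^*)\,\eta_i S\eta_i$ by pairing against each $\chi_k$ (equivalently, reading off the coefficient of $E_k$). The expected mechanism is a completeness/orthogonality relation: the weighted sum over conjugacy classes of $\eta_i S\eta_i$ should reproduce the Casimir-type expression, mirroring the fact that $\sum_i \frac{1}{d_i^2}E_i\otimes E_i$ is the Casimir element of $\gb^Z$ in \eqref{cas0}. Indeed, I anticipate this proposition is really the statement that $z_2=Cas^Z_{\gb}$ expressed in the \emph{other} natural dual-basis pair $\{F_i\},\{\eta_i\}$ rather than $\{\chi_i\},\{\frac{1}{d_i}E_i\}$; the Casimir element is basis-independent, so once I identify the correct scaling the two formulas must coincide.

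The hard part will be pinning down the precise normalization constant relating $\eta_i S\eta_i$ to the Casimir contribution — specifically verifying that the factor $\dim(F_iH^*)$ together with the $\frac{1}{d}$ prefactor exactly cancels the normalizations hidden in $\eta_i=\frac{C_i}{\dim(f_iH^*)}$ and $C_i=\gL\leftharpoonup dF_i$ from \eqref{ci}. I would handle this by treating $\{\sqrt{\dim(F_iH^*)}\,\eta_i\}$-type rescalings as an orthonormal-like basis for $\gb^Z$ and checking directly that $\{F_i\}$ and $\{\dim(F_iH^*)^{-1}C_i\}$ form dual bases with respect to $\gb^Z(z,z')=\langle\gl,zz'\rangle$, so that $\frac{1}{d}\sum_i \dim(F_iH^*)\,\eta_i\otimes S\eta_i$ is the Casimir element and its central image is $z_2$. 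Getting every scalar ($d$, $\dim(F_iH^*)$, and the $S$ acting on the second factor) to line up is the only real obstacle; the conceptual content is simply the basis-independence of the central Casimir element.
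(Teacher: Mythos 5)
Your final strategy---realizing the right-hand side as the central Casimir element of $\gb^Z$ in the dual-basis pair built from the normalized class sums, and invoking basis-independence together with $Cas^Z_{\gb_2}=z_2$ (Corollary \ref{casn1})---is exactly the paper's proof. One small correction to the "hard part" you defer: the pair to verify as $\gb^Z$-dual is $\{\eta_i\}$ and $\{\frac{\dim(F_iH^*)}{d}S\eta_i\}$ inside $Z(H)$, not $\{F_i\}$ against $\{\dim(F_iH^*)^{-1}C_i\}$ (those live in $R(H)$ and $Z(H)$ respectively and are dual only for the evaluation pairing); the needed identity $\langle\gl,\eta_iS\eta_j\rangle=\gd_{ij}\frac{d}{\dim(F_iH^*)}$ follows in one line from $\Psi(\eta_i)=\frac{d}{\dim(F_iH^*)}F_i$ and $\langle F_i,\eta_j\rangle=\gd_{ij}$, which is precisely how the paper pins down the scalars.
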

\begin{proof}
Since $\eta_i=\Psi\minus\left(\frac{d}{\dim (F_iH^*)}F_i\right),$ it follows that $\Psi(\eta_i)= \frac{d}{\dim (F_iH^*)}F_i.$ A direct computation shows  that $\left\langle F_i,\gL\right\rangle =\frac{\dim(F_iH^*)}{d}$ (see e.g. \cite{cw3}). Hence
$$\left\langle\gl,\eta_iS\eta_j\right\rangle=\langle\Psi(\eta_i),\eta_j\rangle=\left\langle \frac{d}{\dim (F_iH^*)}F_i,\gL\leftharpoonup\frac{d}{\dim (F_jH^*)}F_j\right\rangle=\gd_{ij}\frac{d}{\dim (F_iH^*)}.$$

Let $\gb_2$ be the bilinear form associated with $t_2=\gl.$  Then $\{\eta_i\},\{\frac{dim (F_iH^*)}{d} S\eta_i\}$ form a dual basis for  $\gb_2.$ The result follows now from Corollary \ref{casn1}.
\end{proof}

Observe that for a group $G,$ we have $g$ is a commutator if and only if $f_{rob}(g)\ne 0.$ Moreover, $g\in G'$ if and only if $f_n(g)\ne 0$ for some $n\ge 1,$ where $f_n$ is as defined in \eqref{fng}. Similarly, $g$ is an iterated commutator if and only if $f_{\{\{x,y\}z\}}(g)\ne 0,$ where $f_{\{\{x,y\}z\}}$ is as defined in \eqref{nxyz}.  Hence also for a class sum $C,\;f_{rob}(C)\ne 0$ (respectively $f_n(C)\ne 0,\, f_{\{\{x,y\}z\}}(C)\ne 0$) if and only if $C$ is a sum of commutators ( respectively $C\in kG',\;C$ is a sum of iterated commutators).

\medskip Since any ad-stable left coideal $L$ is a $D(H)$-module and since the ${\mathfrak C}_i$ are all the irreducible $D(H)$-submodules of $H,$ it follows that  $L$ is a sum of some of the ${\mathfrak C}_i$'s. As such $L$ is generated as a coideal by some of the $\eta_i$'s.  Since $H'$ is an ad-stable left coideal of $H,$ the following question is relevant:

\begin{que}\label{question}
Let $H$ be a semisimple Hopf algebra over an algebraically closed field of characteristic $0,$ and let $\eta$ be a normalized class sum. Is it true that $f_n(\eta)\ne 0$ if and only if $\eta\in H'?$
\end{que}
Recall $H'$ is generated as an algebra by $\{z_n\leftharpoonup H^*\}$ (Theorem \ref{z2com2}). Also recall that both $H'$ and  $\{z_n\leftharpoonup H^*\}$ are $D(H)$-modules hence are direct sums of conjugacy classes. As such they are obtained from certain normalized class sums.  So a step in answering the question above is based on the following lemma:
\begin{lemma}\label{P} Assume $H$ is almost cocommutative and let $f=\sum_i\ga_iF_i\in R(H)$ and $P_f=\Psi\minus(f)\leftharpoonup H^*.$ Then:
$$\eta_j\in P_f\Longleftrightarrow \langle f,\eta_j\rangle\ne 0.$$
\end{lemma}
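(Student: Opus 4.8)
The plan is to rewrite both sides in the dual bases $\{F_i\}$, $\{\eta_i\}$ and then reinterpret $P_f$ as a $D(H)$-submodule whose decomposition into conjugacy classes can be read off directly. First I would expand $\Psi\minus(f)$ in the basis $\{\eta_i\}$ of $Z(H)$. Since $H$ is almost cocommutative we have $f_i=F_i$, and by the computation in the proof of Proposition \ref{cas5}, $\Psi(\eta_i)=\frac{d}{\dim(F_iH^*)}F_i$, so that $\Psi\minus(F_i)=\frac{\dim(F_iH^*)}{d}\eta_i$. Applying $\Psi\minus$ to $f=\sum_i\ga_iF_i$ gives $\Psi\minus(f)=\sum_i\gga_i\eta_i$ with $\gga_i=\ga_i\,\frac{\dim(F_iH^*)}{d}$; in particular $\gga_i\neq0$ if and only if $\ga_i\neq0$. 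On the other side, since $\{F_i\}$ and $\{\eta_j\}$ are dual bases we have $\langle f,\eta_j\rangle=\ga_j$. Thus the right-hand condition $\langle f,\eta_j\rangle\neq0$ is exactly $\gga_j\neq0$, and it remains to prove $\eta_j\in P_f\iff\gga_j\neq0$.

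Next I would identify $P_f$ as a $D(H)$-submodule of $H$. Because $\Psi(Z(H))=R(H)$, the element $\Psi\minus(f)$ lies in $Z(H)$, so $P_f=\Psi\minus(f)\leftharpoonup H^*$ is an ad-stable left coideal by \cite{cw4}, hence a $D(H)$-submodule. For each $i$, the normalized class sum $\eta_i$ is a nonzero scalar multiple of $\gL\leftharpoonup F_i$, and since $\leftharpoonup$ is a right $H^*$-action we get $\eta_i\leftharpoonup H^*=\gL\leftharpoonup F_iH^*={\mathfrak C}_i$. The ${\mathfrak C}_i$ are pairwise non-isomorphic irreducible $D(H)$-modules, so $\sum_i{\mathfrak C}_i=\bigoplus_i{\mathfrak C}_i$ is an internal direct sum, and the element $\Psi\minus(f)=\sum_i\gga_i\eta_i$ has $i$-th homogeneous component $\gga_i\eta_i\in{\mathfrak C}_i$.

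Finally, since $\bigoplus_i{\mathfrak C}_i$ is a multiplicity-free semisimple module, every submodule is the sum of a subset of the ${\mathfrak C}_i$. The submodule generated by $\Psi\minus(f)$ is contained in $\bigoplus_{\gga_i\neq0}{\mathfrak C}_i$ and contains each ${\mathfrak C}_i$ with $\gga_i\neq0$, for it contains the nonzero component $\gga_i\eta_i$, which generates the irreducible ${\mathfrak C}_i$; hence $P_f=\bigoplus_{\gga_i\neq0}{\mathfrak C}_i$. Consequently $\eta_j\in P_f$ if and only if $\gga_j\neq0$, which by the first step is equivalent to $\langle f,\eta_j\rangle\neq0$.

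The step I expect to be the main obstacle is the identification $P_f=\bigoplus_{\gga_i\neq0}{\mathfrak C}_i$, which rests on two inputs imported from \cite{cw4}: that $z\leftharpoonup H^*$ is ad-stable for central $z$, so that $P_f$ is genuinely a $D(H)$-submodule and not merely a coideal, and that the conjugacy-class modules ${\mathfrak C}_i$ are pairwise non-isomorphic, so that the submodule generated by $\sum_i\gga_i\eta_i$ picks out exactly those ${\mathfrak C}_i$ with $\gga_i\neq0$. Granting these facts, the remainder is the routine multiplicity-free extraction of homogeneous components.
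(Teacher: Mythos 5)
Your proof is correct, but it runs along a genuinely different track from the paper's. The paper argues entirely inside $R(H)$: writing $\eta_j=\Psi\minus(fq)$ forces $fq$ to be a nonzero multiple of $F_j$, and since $fF_j=\ga_jF_j$ this happens exactly when $\ga_j\ne 0$; the converse is the one-line observation $fF_j=\ga_jF_j\ne 0$. You instead transport everything to the $H$-side and read the statement off the $D(H)$-module decomposition $H=\bigoplus_i{\mathfrak C}_i$: you identify $P_f$ with the $D(H)$-submodule generated by $\Psi\minus(f)=\sum_i\gga_i\eta_i$ and conclude $P_f=\bigoplus_{\gga_i\ne 0}{\mathfrak C}_i$ by multiplicity-freeness. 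Both are sound. The trade-off: the paper's computation needs only that the $F_i$ are orthogonal idempotents spanning $R(H)$ and that $\Psi\minus$ is bijective, so it is self-contained; your argument imports more from \cite{cw4} (ad-stability of $z\leftharpoonup H^*$ for central $z$, irreducibility \emph{and} pairwise non-isomorphism of the ${\mathfrak C}_i$), but in exchange it exposes the structural content -- that $P_f$ is precisely the sum of the conjugacy classes in the support of $f$ -- which is exactly the picture the authors invoke informally in the paragraph preceding Question \ref{question}. Two small remarks: the extraction of the homogeneous component $\gga_i\eta_i$ does not really require the representation theory, since $\Psi\minus(f)\leftharpoonup F_i=\gL\leftharpoonup fF_i=\gga_i\eta_i$ does it directly (and this also gives ${\mathfrak C}_i\subseteq P_f$ when $\ga_i\ne0$ without appealing to irreducibility); and the directness of $\sum_i{\mathfrak C}_i$ follows already from the $F_iH^*$ being independent right ideals together with injectivity of $\Psi\minus$, so you could lighten your dependence on the non-isomorphism claim if desired.
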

\begin{proof}
Assume $\eta_j\in P_f.$ Then $\eta_j=\Psi\minus(f)\leftharpoonup q=\Psi\minus(fq).$ By the definition of $\eta_j$ it follows that
$$fq=\frac{d}{\dim (F_iH^*)}F_j.$$
 Since both $\eta_j,\,\Psi\minus(f)\in Z(H)$ it follows that $q\in R(H).$ Hence  we have
$q=\sum\gb_iF_i,\,\gb_i\in k$ and so  $fq=\sum\ga_i\gb_iF_i.$ This implies that $$\ga_j\gb_j= \frac{d}{\dim (F_iH^*)},$$ in particular
$\langle f,\eta_j\rangle=\ga_j\ne 0.$

Conversely, assume $\langle f,\eta_j\rangle\ne 0.$ Then $\ga_jF_j=fF_j\ne 0,$ which implies that  $\eta_j\in P_f.$
\end{proof}

As a consequence we obtain:
\begin{theorem}\label{eta}
Let $H$ be an almost cocommutative semisimple Hopf algebra over an algebraically closed field of characteristic $0,$ let $\eta_j$ be a normalized class and let $f_n$ be as in \eqref{fng}. Then
$$\eta_j\in z_2^n\leftharpoonup H^*\Longleftrightarrow \langle f_n,\eta_j\rangle\ne 0.$$
\end{theorem}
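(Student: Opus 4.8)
The plan is to realize Theorem \ref{eta} as a direct instance of Lemma \ref{P}, once $f_n$ is identified with $\Psi$ applied to a scalar multiple of $z_2^n$. Since $H$ is almost cocommutative, $R(H)$ is commutative and the central primitive idempotents $\{F_i\}$ form a basis of $R(H)$; because $f_n\in R(H)$, it can be written as $f_n=\sum_i\ga_i F_i$, which is exactly the shape required by the hypothesis of Lemma \ref{P}.

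The key step is to compute $\Psi\minus(f_n)$. Comparing \eqref{fng} with the middle term of Theorem \ref{bulletl} at $l=n$, one sees that
$$f_n=\sum_i\frac{d^{2n-1}}{d_i^{2n-1}}\chi_i=d^{2n-1}\Psi(z_{2n}),$$
and by Theorem \ref{central22} we have $z_{2n}=z_2^n$. Applying $\Psi\minus$ therefore gives $\Psi\minus(f_n)=d^{2n-1}z_2^n$.

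Now set $f=f_n$ in Lemma \ref{P}. Its associated subspace is $P_{f_n}=\Psi\minus(f_n)\leftharpoonup H^*=d^{2n-1}z_2^n\leftharpoonup H^*$. Since the field has characteristic $0$ and $d\ne 0$, the scalar $d^{2n-1}$ is invertible and is absorbed into the $H^*$-action, so $P_{f_n}=z_2^n\leftharpoonup H^*$. Lemma \ref{P} then reads precisely
$$\eta_j\in z_2^n\leftharpoonup H^*\Longleftrightarrow \langle f_n,\eta_j\rangle\ne 0,$$
which is the assertion.

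There is no genuine obstacle here beyond the bookkeeping: the whole content has been front-loaded into Lemma \ref{P} and into the identity $f_n=d^{2n-1}\Psi(z_2^n)$. The one point that must not be overlooked is where almost cocommutativity enters, namely to guarantee that $\{F_i\}$ is a basis of $R(H)$ so that $f_n$ genuinely has the form $\sum_i\ga_i F_i$, and that under the dual bases $\{F_i\},\{\eta_i\}$ the pairing $\langle f_n,\eta_j\rangle$ isolates the single coefficient $\ga_j$ on which Lemma \ref{P} turns; without commutativity of $R(H)$ neither this idempotent decomposition nor the dual-basis pairing underlying the lemma would be available.
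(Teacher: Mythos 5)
Your proof is correct and follows exactly the paper's route: the paper's own proof is the one-line statement that the result follows from Lemma \ref{P} with $f=f_n$, and your argument simply fills in the implicit details (the identification $\Psi\minus(f_n)=d^{2n-1}z_2^n$ via Theorem \ref{bulletl} and Theorem \ref{central22}, and the absorption of the nonzero scalar into the $H^*$-action). Your remark on where almost cocommutativity is actually used is a welcome addition, but the approach is the same.
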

\begin{proof}
The result follows from Lemma \ref{P} with $f=f_n.$
\end{proof}

Since $\Psi\minus(f_{\{\{x,y\},z\}})=d^2\{z_2,\gL\}$ by Theorem \ref{z2lamda}, another consequence of Lemma \ref{P} is:
\begin{coro}\label{iterated1}
If $H$ is semisimple and almost cocommutative then:
$$\eta_j\in \{z_2,\gL\}\leftharpoonup H^*\Longleftrightarrow \langle f_{\{\{x,y\},z\}},\eta_j\rangle\ne 0,$$
\end{coro}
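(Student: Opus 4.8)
The plan is to apply Lemma \ref{P} directly with the choice $f=f_{\{\{x,y\},z\}}$. First I would verify that this function lies in $R(H)$: by its definition in \eqref{nxyz} it is a $k$-linear combination of the irreducible characters $\chi_i$, hence an element of $R(H)={\rm Sp}_k\{\chi_i\}$. Since $H$ is almost cocommutative, $R(H)$ is commutative and the central primitive idempotents $\{F_i\}$ form a basis of $R(H)$, so we may write $f_{\{\{x,y\},z\}}=\sum_i\ga_iF_i$ for suitable scalars $\ga_i$, putting $f$ into the exact form demanded by the hypothesis of Lemma \ref{P}.

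The second step is to identify the left coideal $P_f=\Psi\minus(f)\leftharpoonup H^*$. By Theorem \ref{z2lamda} we have $f_{\{\{x,y\},z\}}=d^2\Psi(\{z_2,\gL\})$, and since $\Psi$ is a bijection, applying $\Psi\minus$ gives $\Psi\minus(f_{\{\{x,y\},z\}})=d^2\{z_2,\gL\}$; note that this element is indeed central, as required for Lemma \ref{P} to apply, both because $\{z_2,\gL\}\in Z(H)$ by Corollary \ref{sumz2}.4 and because $\Psi\minus(R(H))=Z(H)$. Consequently $P_f=d^2\{z_2,\gL\}\leftharpoonup H^*$, and since the hit action $\leftharpoonup$ is $k$-linear, multiplying by the nonzero scalar $d^2$ does not alter the resulting subspace, so $P_f=\{z_2,\gL\}\leftharpoonup H^*$.

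Finally I would invoke Lemma \ref{P} with this $f$, which yields $\eta_j\in P_f\Longleftrightarrow\langle f,\eta_j\rangle\ne0$; substituting $P_f=\{z_2,\gL\}\leftharpoonup H^*$ and $f=f_{\{\{x,y\},z\}}$ produces the asserted equivalence $\eta_j\in\{z_2,\gL\}\leftharpoonup H^*\Longleftrightarrow\langle f_{\{\{x,y\},z\}},\eta_j\rangle\ne0$. The proof itself is a direct bookkeeping application of Lemma \ref{P}, so I do not anticipate a genuine obstacle; the substantive work was already carried out in Theorem \ref{z2lamda}, which recognizes $f_{\{\{x,y\},z\}}$ as $d^2\Psi(\{z_2,\gL\})$ and thereby pins the preimage $\Psi\minus(f_{\{\{x,y\},z\}})$ down to a scalar multiple of the central iterated commutator $\{z_2,\gL\}$. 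The only point meriting a moment's care is that the scalar $d^2\ne0$ may be absorbed into the hit action without affecting $P_f$.
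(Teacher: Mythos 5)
Your proof is correct and follows exactly the paper's route: the paper likewise derives this corollary by applying Lemma \ref{P} with $f=f_{\{\{x,y\},z\}}$, using the identity $\Psi\minus(f_{\{\{x,y\},z\}})=d^2\{z_2,\gL\}$ from Theorem \ref{z2lamda}. Your added remarks (that $f\in R(H)$ can be expanded in the $F_i$, and that the nonzero scalar $d^2$ does not change the coideal $\Psi\minus(f)\leftharpoonup H^*$) are exactly the implicit bookkeeping the paper leaves to the reader.
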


\medskip

\begin{rema}\label{iterated2}
 When $R(H)$ is commutative we can  use the character table to write $\chi_is(\chi_i)$ as a sum of the irreducible characters, and then compute $\langle\chi_is(\chi_i),z_2\rangle$ by using Corollary \ref{sumz2}.3.
\end{rema}
\medskip
The known conjecture of Kaplansky is that if $H$ is semisimple then $d_i|d.$ It is known to be true for various families of Hopf algebras, in particular when $H$ is a semisimple quasitriangular Hopf algebra \cite{eg}.  In this case we have:
\begin{theorem}\label{kap}
Let $H$ be a semisimple Hopf algebra over an algebraically closed field of characteristic $0,$ and assume $d_i|d$ for any irreducible $H$-module $V_i.$ Then the  functions $f_{rob},\,f_n$ and $f_{\{\{x,y\},z\}}$ given in \eqref{frob}, \eqref{fng} and \eqref{nxyz} are characters.
\end{theorem}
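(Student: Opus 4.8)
The plan is to use that, since $H$ is semisimple, an element of $R(H)$ is a character precisely when it is a non-negative integer combination of the irreducible characters $\chi_i$: such a combination $\sum_i m_i\chi_i$ with each $m_i$ a non-negative integer is the character of $\bigoplus_i V_i^{\oplus m_i}$, and conversely every character decomposes this way. So for each of $f_{rob}$, $f_n$ and $f_{\{\{x,y\},z\}}$ I would simply inspect the coefficient of $\chi_i$ and show it is a non-negative integer, the hypothesis $d_i\mid d$ being the key input.

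The first two functions are immediate. The coefficient of $\chi_i$ in $f_{rob}=\sum_i\frac{d}{d_i}\chi_i$ is $\frac{d}{d_i}$, a positive integer since $d_i\mid d$; likewise the coefficient $\left(\frac{d}{d_i}\right)^{2n-1}$ of $\chi_i$ in $f_n$ is a positive integer, being a power of one. Hence both already have the required form and are characters.

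The real work is $f_{\{\{x,y\},z\}}$. Here I would first invoke the middle expression of Theorem \ref{z2lamda}, so that the coefficient of $\chi_i$ equals $\frac{d^2}{d_i}\langle\chi_i s(\chi_i),z_2\rangle$. Next I would use that $\chi_i s(\chi_i)$ is the character of the genuine $H$-module $V_i\ot V_i^*$, hence decomposes in $R(H)$ as $\chi_i s(\chi_i)=\sum_l N_{il}\chi_l$ with non-negative integer multiplicities $N_{il}$. Pairing against $z_2$ and applying Corollary \ref{sumz2}.3, which gives $\langle\chi_l,z_2\rangle=\frac{1}{d_l}$, yields $\langle\chi_i s(\chi_i),z_2\rangle=\sum_l\frac{N_{il}}{d_l}$. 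Substituting, the coefficient of $\chi_i$ becomes
$$\frac{d^2}{d_i}\sum_l\frac{N_{il}}{d_l}=\sum_l N_{il}\,\frac{d}{d_i}\,\frac{d}{d_l}.$$
Since $d_i\mid d$ and $d_l\mid d$, each summand is a product of non-negative integers, so the whole coefficient is a non-negative integer and $f_{\{\{x,y\},z\}}$ is a character.

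The main obstacle is precisely this last function: unlike $f_{rob}$ and $f_n$, its coefficients are not manifestly integral, and one must pass through the fusion multiplicities $N_{il}$ of $V_i\ot V_i^*$ together with the evaluation $\langle\chi_l,z_2\rangle=\frac{1}{d_l}$ to expose the integrality. It is the non-negative integrality of the structure constants of $R(H)$ that makes \emph{non-negativity} (not merely integrality) automatic, so beyond semisimplicity and the divisibility hypothesis no further positivity argument is required.
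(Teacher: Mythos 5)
Your proposal is correct and follows essentially the same route as the paper: both reduce the only nontrivial case, $f_{\{\{x,y\},z\}}$, to the decomposition $\chi_i s(\chi_i)=\sum_k m_{ki}\chi_k$ with non-negative integer multiplicities, evaluate against $z_2$ via Corollary \ref{sumz2}.3, and read off coefficients $\sum_k \frac{d^2}{d_id_k}m_{ki}$, which are integers under the hypothesis $d_i\mid d$. Your explicit remark that non-negativity comes for free from the fusion multiplicities is a small but welcome clarification of a point the paper leaves implicit.
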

\begin{proof}
The functions $f_{rob}$ and $f_n$ are clearly characters. We show it for  $f_{\{\{x,y\},z\}}$ as well. Let $\chi_is(\chi_i)=\sum m_{ki}\chi_k,\;m_{ki}\in\Z^+.$ By Corollary \ref{sumz2}.3, $\langle\chi_is(\chi_i),z_2\rangle=\sum_k\frac{m_{ki}}{d_k}.$ Hence by Theorem \ref{z2lamda},
$$f_{\{\{x,y\},z\}}=d^2\sum_i \frac{1}{d_i}\langle \chi_is(\chi_i),z_2\rangle\chi_i=\sum_{i,k} \frac{d^2}{d_id_k}m_{ki}\chi_i.$$
Since all the coefficients are integers we are done.
\end{proof}
\begin{rema}\label{susan}
The root function for Hopf algebras is not necessarily a character for Hopf algebras satisfying the hypothesis of Theorem \ref{kap}. In \cite{imm} the authors found groups $G$ so that the Frobenius-Schur indicators of $D(G)$ are not  integers and so the appropriate root function is not a character.
\end{rema}

\medskip
If $(H,R)$ is also factorizable, more can be said. Recall \cite{cw4} that when $H$ is factorizable we have
\begin{equation}\label{fqchi}f_Q(\chi_i)=d_i\eta_i\end{equation} for all $i>0,$
where $f_Q(p)=\sum \langle p, R^2R^1\rangle R^1R^2$ is the Drinfeld map.

We can show:
\begin{proposition}\label{fact}
Let $H$ be a factorizable semisimple Hopf algebra. Then:

\medskip\noin{\rm 1}. Let $\chi_{ad}=\sum_i \chi_i s(\chi_i)$ be the character of the left adjoint representation of $H.$ Then $$f_Q(\chi_{ad})=dz_2.$$

\medskip\noin{\rm 2}. Let $F=f_Q\Psi$ and $F'=\Psi f_Q$ be the quantum Fourier Transforms defined on $H$ and $H^*$ respectively (see\cite{lm,cw1}). Then $$F(z_2)=\sum_i\eta_i\qquad F'(\chi_{ad})=f_{rob}.$$
\end{proposition}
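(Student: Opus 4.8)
The plan is to derive both formulas of part 2 from part 1 together with facts already in hand, and to concentrate the real work on the identity $f_Q(\chi_{ad})=dz_2$. For the first formula of part 2, I would write $F(z_2)=f_Q(\Psi(z_2))$; by Proposition \ref{frobn} one has $\Psi(z_2)=\frac1d f_{rob}=\sum_i\frac1{d_i}\chi_i$, so applying \eqref{fqchi} term by term gives $F(z_2)=\sum_i\frac1{d_i}f_Q(\chi_i)=\sum_i\frac1{d_i}(d_i\eta_i)=\sum_i\eta_i$ (the term $i=0$ is harmless, since $f_Q(\gep)=1=\eta_0$). For the second formula, $F'(\chi_{ad})=\Psi(f_Q(\chi_{ad}))$, and once part 1 supplies $f_Q(\chi_{ad})=dz_2$ this equals $\Psi(dz_2)=d\Psi(z_2)=f_{rob}$ by Proposition \ref{frobn} again. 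So everything reduces to part 1.

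To prove $f_Q(\chi_{ad})=dz_2$, I would use that $f_Q(\chi_{ad})$ lies in $Z(H)$ and hence, by \eqref{z}, is determined by its pairings $\langle\chi_j,f_Q(\chi_{ad})\rangle$ against the irreducible characters. Since $z_2=\sum_j\frac1{d_j^2}E_j$ (Theorem \ref{z2}) and $\langle\chi_j,z_2\rangle=\frac1{d_j}$ (Corollary \ref{sumz2}.3), it is enough to show $\langle\chi_j,f_Q(\chi_{ad})\rangle=\frac d{d_j}$ for every $j$; then $f_Q(\chi_{ad})=\sum_j\frac1{d_j}\cdot\frac d{d_j}E_j=dz_2$. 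Now expand $\chi_{ad}=\sum_i\chi_i s(\chi_i)$ and use that the Drinfeld map is an algebra homomorphism on $R(H)$ with $f_Q(\chi_i)=d_i\eta_i$ and $f_Q(s(\chi_i))=S(f_Q(\chi_i))=d_iS\eta_i$ (the antipode-intertwining being a standard property of $f_Q$); thus $f_Q(\chi_i s(\chi_i))=d_i^2\,\eta_iS\eta_i$. Because $\eta_i$ and $S\eta_i$ are central they act on $V_j$ as the scalars $\frac1{d_j}\langle\chi_j,\eta_i\rangle$ and $\frac1{d_j}\langle\chi_{j^*},\eta_i\rangle$, so, writing $s_{ji}:=\langle\chi_j,f_Q(\chi_i)\rangle=d_i\langle\chi_j,\eta_i\rangle$ for the entries of the $S$-matrix, the pairing collapses to $\langle\chi_j,f_Q(\chi_{ad})\rangle=\frac1{d_j}\sum_i s_{ji}s_{j^*i}$.

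The crux is then the single identity $\sum_i s_{ji}s_{j^*i}=d$. This is precisely the relation $S^2=d\,C$ (the square of the $S$-matrix equals $d$ times the charge-conjugation permutation $C_{jk}=\gd_{k,j^*}$) for a factorizable Hopf algebra, and it is the main obstacle, being the one input not contained in the excerpt; I would import it from the factorizability theory underlying \eqref{fqchi} (cf.\ \cite{cw4}), where $f_Q:R(H)\to Z(H)$ is an isomorphism and the second orthogonality relation for the $S$-matrix is available. Granting it, $\langle\chi_j,f_Q(\chi_{ad})\rangle=\frac d{d_j}$ and part 1 follows. As an alternative route to the same point, one may instead combine the multiplicativity computation $f_Q(\chi_{ad})=\sum_i d_i^2\,\eta_iS\eta_i$ with Proposition \ref{cas5}, which writes $dz_2=\sum_i\dim(F_iH^*)\eta_iS\eta_i$; the two expressions agree once one knows $\dim(F_iH^*)=d_i^2$ under the correspondence $f_Q(\chi_i)=d_i\eta_i$, an identity that is itself equivalent to $S^2=d\,C$.
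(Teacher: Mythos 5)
Your proof is correct, and your part 2 is essentially the paper's: for $F(z_2)$ you apply $f_Q$ to $\Psi(z_2)=\sum_i\frac{1}{d_i}\chi_i$ and use \eqref{fqchi}, and $F'(\chi_{ad})=\Psi(dz_2)=f_{rob}$ follows from part 1 via Proposition \ref{frobn}, exactly as in the text. For part 1 your primary route is genuinely different. The paper combines Proposition \ref{cas5} (which gives $dz_2=\sum_i\dim(F_iH^*)\,\eta_iS\eta_i$) with the multiplicativity of $f_Q$ on $R(H)$, \eqref{fqchi}, and the factorizability fact $\dim(F_iH^*)=\dim(E_iH^*)=d_i^2$ --- which is precisely the ``alternative route'' you sketch in your closing sentence. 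Your main argument instead evaluates the central element $f_Q(\chi_{ad})$ against the irreducible characters and reduces everything to the $S$-matrix orthogonality $\sum_i s_{ji}s_{j^*i}=d$ (that is, $S^2=dC$). Both arguments import one genuine external fact about factorizable Hopf algebras that is not proved in this paper: yours needs $S^2=dC$, the paper's needs $\dim(F_iH^*)=d_i^2$, and both need $f_Q(s(\chi_i))=d_iS\eta_i$ (equivalently $f_Q\circ s=S\circ f_Q$ on $R(H)$) to handle the $s(\chi_i)$ factor in $\chi_{ad}$; these inputs are of comparable depth and all come from the Verlinde-type theory of \cite{cw2,cw4}. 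What your route buys is the explicit numerical identity $\langle\chi_j,f_Q(\chi_{ad})\rangle=\frac{d}{d_j}$ and independence from Proposition \ref{cas5}; the paper's route is shorter given that Proposition \ref{cas5} is already in place.
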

\begin{proof}
1. Follows from Proposition \ref{cas5}, \eqref{fqchi},  the fact that $f_Q$ is multiplicative on $R(H)$ and $\dim(F_iH^*)=\dim(E_iH^*)=d_i^2.$

2. By Theorem \ref{z2}, $z_2=\sum_i\frac{1}{d_i^2}E_i,$ hence we have:
$$f_Q\Psi (z_2)=f_Q\left(\sum_i \frac{1}{d_i}\chi_i\right)=\sum_i\eta_i.$$
The second part follows directly from part 1.
\end{proof}


\begin{thebibliography}{[99999]}

\bibitem[AV]{av} A. Amit and U. Vishne, Characters and the number of solutions to equations in Groups, Journal of Algebra and Its Applications, Vol. 10(4), 675-686, (2011).
\bibitem[AN]{an} N. Andruskiewitsch and S. Natale, Harmonic
analysis on semisimple Hopf algebras, (English. English summary)
Algebra i Analiz 12 (2000), no. 5, 3-27; translation in St.
Petersburg Math. J. 12 (2001), no. 5, 713-732
\bibitem[Bu1]{bu1} S. Burciu, Normal coideal subalgebras of semisimple Hopf algebras, Algebra, Geometry, and Mathematical Physics, Journal of Physics: Conference Series 346 (2012),1-10. 
\bibitem[Bu2]{bu2} S. Burciu, Kernel of representations and coideal subalgebras for Hopf algebras, Glasgow Math. J. 54 (2012) 107-119.


\bibitem[CW1]{cw1} M. Cohen and S. Westreich, Fourier transforms for Hopf algebras,
 Contemporary Math, sub series of Israel Math. Conf. Vol. 433, (2007), 115-133.


\bibitem[CW2]{cw2} M. Cohen and S. Westreich, Higman ideals and Verlinde-type formulas for Hopf algebras, "Ring and Module Theory, Trends in Mathematics, 2010, 91-114.
\bibitem[CW3]{cw3} M. Cohen and S. Westreich, Structure constants related to symmetric Hopf algebras,  J. Alg. 324 (2010), pp. 3219-3240.

\bibitem[CW4]{cw4} M. Cohen and S. Westreich, Conjugacy Classes, Class Sums and Character Tables for
Hopf Algebras, Communications in Algebra, 39, (2011), 4618-4633.

\bibitem[CW5]{cw5} M.Cohen and S. Westreich, Recovering information from the character tables of Hopf algebras - normality, dimensions and extensions, Hopf algebras and tensor categories, Contemporary Math (585), Amer. Math. Soc, (2013), 213-227.


\bibitem[Dr]{dr} V. G. Drinfeld, On Almost Cocommutative Hopf Algebras, Leningrad Math. J. 1
(1990), 321-342.




\bibitem[EG]{eg} P. Etingof, S. Gelaki, Some properties of finite-dimensional
semisimple Hopf algebras, Math. Res. Lett. 5 (1998), 191-197.






\bibitem[Is]{is} I. M. Isaacs, Character theory of finite groups, Pure Appl. Math., Vol. 69,
Academic Press, New York, 1976.

\bibitem[IMaMo]{imm} M. Iovanov, Geoffrey Mason, Susan Montgomery, $FSZ$-groups and Frobenius-Schur Indicators of Quantum Doubles, arXiv:1208.4153, 2012.
\bibitem[Kac]{k} {\sc G.I. Kac}, Certain arithmetic properties of
ring groups, Funct. Anal. Appl. 6 (1972) 158-160.

\bibitem[Kos]{kos} Bertram Kostant, Groups over $\Z$, Proc. Sympos. Pure Math. 9, Amer. Math.
Soc. (1966), 90 - 98.


\bibitem[Kop]{kop} M. Koppinen; Coideal subalgebras in Hopf algebras: Freeness, integrals, smash products,
Comm. Algebra 21 (1993), 427-444.
\bibitem[KM]{km} P. Kellersch, K. Meyberg, On a Casimir element of a finite group, Com. Alg. 25(6), (1997), 1695-1702.
\bibitem[KaSZ]{ksz} Y. Kashina, Y. Sommerhauser, and Y. Zhu, On higher Frobenius-Schur indicators, AMS Memoirs 181 (2006), no. 855, 65 pp.


\bibitem[La]{la}  R. G. Larson,  Characters of Hopf algebras, J. Alg. 17 (1971), 352-368.
\bibitem[LaRa] {lr} R. G. Larson and D. E. Radford, Semisimple cosemisimple Hopf algebras, Am.
J. Math. 109 (1987), 187-195.
\bibitem[LMj]{lm} V. Lyubashenko and S. Majid, Braided groups and
quantum Fourier transform, J. Algebra 166 (1994), 506-528.

\bibitem[Ni]{ni} D. Nikshych, $K_0$-rings and twistings of ﬁnite-dimensional semisimple Hopf algebras,
Comm. Algebra, 26 (1998), 321-342.



\bibitem[NR1]{nr1} W. D. Nichols and M. B. Richmond, The Grothendieck group of a Hopf algebra, J. Pure Appl.
Algebra 106 (1996), no. 3, 297-306.
\bibitem [NR2]{nr2} W. D. Nichols and M. B. Richmond, The Grothendieck algebra of a Hopf algebra I, Comm. Algebra 26 (1998), no. 4, 1081-1095.
\bibitem[OS]{os} U. Oberst and H.-J. Schneider, \"{U}ber
Untergruppen endlicher algebraischer Gruppen, Manuscript Math. 8 (1973), 217-241.
\bibitem[PW]{pw} P. Podles and S.L. Woronowicz, Quantum deformation of Lorentz group. Comm. Math. Phys., 130 (1990), 381-431.
\bibitem[Ra1]{r1} D. Radford, The trace function and Hopf algebras,
 J. Algebra 163 (1994), 3, 583--622.
 \bibitem[Ra2]{ra2} Minimal quasitriangular Hopf algebras, J. Algebra 157 (1993), 285-315.

 \bibitem[Sc]{sc} H.-J.
Schneider, Lectures on Hopf algebras, Trab. Mat. 31/95, Univ. Nac. Cordoba, 1995.

\bibitem[Sk]{sk} S. Skryabin, Projectivity and freeness over comodule algebras,  Trans. Amer. Math. Soc. 359 (2007), 2597-2623.

\bibitem[Ta]{ta} T. Tambour, The number of solutions of some equations in finite groups and a new
proof of Ito’s theorem, Commun. Algebra 28(11) (2000) 5353-5362.





\bibitem[Z]{z} Y. Zhu, Hopf algebras of prime dimension, Int. Math. Res. Not. 91994), 53-59.

\end{thebibliography}
\end{document}